\numberwithin{equation}{section}
\newcommand{\titlestring}{High-dimensional Gaussian 
fields with isotropic increments seen through spin glasses}
\newcommand{\authorstring}{Anton Klimovsky}
\newcommand{\subjectstring}{Primary: 60K35; Secondary: 82B44, 82D30, 60G15,
60G60, 60F10}
\newcommand{\keywordsstring}{Gaussian random fields, isotropic increments,
random energy model, hierarchical replica symmetry breaking, Parisi Ansatz}
\renewcommand{\P}{\mathbb {P}}
\newcommand{\E}{\mathbb {E}}
\newcommand{\R}{\mathbb {R}}
\newcommand{\N}{\mathbb {N}}
\newcommand{\I}{\mathbb{1}} 
\newcommand{\dd}{{\rm d}}
\DeclareMathOperator{\var}{Var}
\DeclareMathOperator{\cov}{Cov}
\newcommand{\eps}{\varepsilon}
\newtheorem{theorem}{Theorem}[section]
\newtheorem{corollary}{Corollary}[section]
\newtheorem{lemma}{Lemma}[section]
\newtheorem{remark}{Remark}[section] 
\def\RPC{\mathrm{RPC}}
\begin{document}

\selectlanguage{english}

\begin{center}
\LARGE{\titlestring}
\end{center}
\vskip1cm
{\large Anton~Klimovsky}\footnote{
Research supported in part by the European
Commission (Marie Curie fellowship, project PIEF-GA-2009-251200); bilateral DFG-NWO
Forschergruppe 498 and the Hausdorff Research Center for Mathematics (Junior Trimester
Program on Stochastics 2010).
}
\\
\emph{EURANDOM, Eindhoven University of Technology, The Netherlands}
\\
e-mail: \texttt{a.klymovskiy@tue.nl}
\vskip0.5cm
\begin{center}
\today
\end{center}
\vskip0.5cm \noindent \textbf{AMS 2000 Subject Classification:} \subjectstring.
\vskip0.5cm \noindent \textbf{Key words:} \keywordsstring.
\vskip0.5cm
\begin{center}
\textbf{Abstract}
\end{center}

\noindent We study the free energy of a particle in (arbitrary) high-dimensional
Gaussian random potentials with isotropic increments. We prove a computable
saddle-point variational representation in terms of a Parisi-type functional
for the free energy in the infinite-dimensional limit. The proofs are based on
the techniques developed in the course of the rigorous analysis of the
Sherrington-Kirkpatrick model with vector spins.

\vskip0.5cm \noindent

\section{Introduction}

Recently, considerable (renewed) attention in the theoretical physics literature
has been devoted to Gaussian random fields with isotropic increments viewed as
random potentials, see, e.g, the works by Fyodorov and Sommers
\cite{FyodorovSommers2007}, Fyodorov and Bouchaud \cite{FyodorovBouchaud2008},
and references therein. In particular, it was heuristically argued in these
works that Parisi's theory of hierarchical replica symmetry breaking (Parisi
Ansatz, cf.~\cite{MezardParisiVirasoro1987}) is applicable in this context. In
the probabilistic context, these results provide rather sharp information about
the extremes of the strongly correlated fields with high-dimensional correlation
structures, which is a challenging area of probability
theory~\cite{TalagrandSpinGlassesBook2003,BovierBook2006,BolthausenBovierBook2007, BoutetdeMonvelBovierBook2009,TalagrandSpinGlassesBookVolOneSecEd2011, TalagrandSpinGlassesBookVolTwoSecEd2011}.

In this note, we initiate the rigorous derivation of the results of
\cite{FyodorovSommers2007,FyodorovBouchaud2008}. We concentrate on the
computation of the \emph{free energy} of a particle subjected to arbitrary
high-dimensional \emph{Gaussian random potentials with isotropic increments}. 
In the high-dimensional limit, we derive a computable \emph{saddle-point
representation} for the free energy,  which is similar to the Parisi formula for
the Sherrington-Kirkpatrick (SK) model of a mean-field spin glass. Our proofs
are based on the \emph{local comparison} arguments for Gaussian fields with
non-constant variance developed in \cite{BovierKlimovskyAS2Guerra2008}, which
are, in turn, based on the ideas of Guerra~\cite{Guerra2003a}, Guerra and
Toninelli~\cite{Guerra-Toninelli-Generalized-SK-2003}, 
Talagrand~\cite{TalagrandParisiFormula2006} and
Panchenko~\cite{panchenko-free-energy-generalized-sk-2005}.

This note is organised as follows. We state our results in
Section~\ref{sec:main-results}. The proofs are given in
Sections~\ref{sec:existence-free-energy} and
\ref{sec:proof-saddle-point-product-case}. In Section~\ref{sec:outlook}, we give
an outlook and announce some important consequences of the results of this note.
In the Appendix, we provide some complementary information for the reader's
convenience.

\section{Setup and main results}
\label{sec:main-results}
Consider the \emph{Gaussian random field} with \emph{isotropic increments} $ X =
X_N = \{ X_N(u) : u \in \R^N \}$, $N \in \N$. The adjective ``isotropic'' means
here that the law of the increments of the field $X$ is invariant under
\emph{rigid motions} ($=$ translations and rotations) in $\R^N$. We are
interested in the case $N \gg 1$ and in the case of \emph{strongly correlated
fields with high-dimensional correlation structure}. Therefore, we assume that
the field $X_N$ satisfies
\begin{align}
\label{eq:incremental-covariance}
\E
\left[
(X_N(u) - X_N(v))^2
\right]
=
D
\left(
\frac{1}{N}\Vert u - v \Vert^2_2
\right)
=:
D_N(
\Vert u - v \Vert^2_2
)
,
\quad
u,v \in \R^N
,
\end{align}
where $\Vert \cdot \Vert_2$ denotes the \emph{Euclidean norm} on $\R^N$ and
the \emph{correlator} $D: \R_+ \to \R_+$ is any admissible function. Complete
characterisation of all correlators $D$ that are admissible in
\eqref{eq:incremental-covariance}, for all $N$, is known, see
Theorem~\ref{thm:characterisation-correlation-functions}.  Note that the law of
the field $X_N$ is determined by \eqref{eq:incremental-covariance} only up to an
additive shift by a Gaussian random variable. In what follows, without loss of
generality, we assume that $X_N(0)=0$.

We are interested in the asymptotic behaviour of the \emph{extremes} of the
random field $X_N$ on the sequence of the \emph{particle state spaces} $S_N
\subset \R^N$ as $N \uparrow +\infty$. The state spaces are assumed to be equipped
with a sequence of \emph{a priori reference measures} $\{ \mu_N \} \subset
\mathcal{M}_{\mathrm{finite}}(S_N)$. We now define the main quantities of
interest in this work. Consider the \emph{partition function}
\begin{align}
\label{eq:partition-function}
Z_N(\beta)
:=
\int_{S_N} \mu_N(\dd u) \exp \left( \beta \sqrt{N} X_N(u) \right)
,
\quad
\beta \in \R
.
\end{align}
 We view \eqref{eq:partition-function} as an exponential functional of the field
$X_N$, which is parametrised by the \emph{inverse temperature} $\beta$.
Heuristically, for large $\beta$ (i.e., $\beta \uparrow +\infty$), the maxima of
the field $X_N$ give substantial contribution to the integral
\eqref{eq:partition-function}. The $N$-scalings in
\eqref{eq:partition-function}, \eqref{eq:incremental-covariance} and the
``size'' of $S_N$ are tailored for studying the large-$N$ limit of the
\emph{quenched log-partition function}:
\begin{align}
\label{eq:log-partition-function}
p_N(\beta)
:=
\frac{1}{N}\log Z_N(\beta)
,
\quad
\beta \in \R
.
\end{align}
 For comparison with the theoretical physics literature, let us note that there one
conventionally substitutes $\beta \mapsto -\beta$ in
\eqref{eq:partition-function} (this has no effect on the distribution of $Z_N$
due to the symmetry of the centred Gaussian distribution of the field $X_N$),
and considers instead of \eqref{eq:log-partition-function} the \emph{free energy}
\begin{align}
\label{eq:free-energy}
f_N(\beta)
:=
- \frac{1}{\beta} p_N(\beta)
,
\quad
\beta \in \R_+
.
\end{align}

\textbf{Assumptions.}
Informally, we require the particle state space $S_N$ to have an
exponentially growing in $N$ volume (respectively, cardinality, if $S_N$ is
discrete). In particular, using physics parlance, this assures that the
\emph{entropy} competes with the \emph{energy} (given by the random field $X_N$)
on the same scale. More formally, we assume
\begin{equation}
\label{eq:product-space}
S_N := S^N
,
\quad
S \subset \R
.
\end{equation}
Let
$\mu \in \mathcal{M}_{\text{finite}}(S)$ be such that
the origin is contained in the interior of the convex hull of the support of
$\mu$. Define $\mu_N := \mu^{\otimes N}
\in
\mathcal{M}_{\text{finite}}(S_N)$. A canonical example is the \emph{discrete
hypercube} $S_N := \{-1; 1\}^N$ equipped with the uniform a priori measure,
i.e., $\mu(\{u\}) := 2^{-N}$, for all $u \in S_N$.

\textbf{Parisi-type functional.}
To formulate our results on the limiting log-partition function, we need the
following definitions. Given $r \in \R_+$, consider the space of the
\emph{functional order parameters}
\begin{align}
\label{eq:chap-1:space-of-order-parameters}
\mathcal{X}(r)
:=
\{
x: [0;r] \to [0;1]
\mid
&
\text{
$x$ is non-decreasing càdlàg,
}
x(0)=0,
x(r)=1
\}
,
\end{align}
It is convenient to work with the space of the \emph{discrete order
parameters}
\begin{align}
\label{eq:chap-1:space-of-discrete-order-parameters}
\mathcal{X}^\prime_n(r)
:=
\{
x \in \mathcal{X}(r)
\mid
\text{$x$ is piece-wise constant with at most $n$ jumps}
\}
.
\end{align}
Let us denote the \emph{effective size} of the particle state space by
\begin{align}
\label{eq:state-space-effective-size}
d := \sup_N
\left(
\frac{1}{N}
\sup_{u \in S_N}
\Vert u \Vert_2^2
\right)
.
\end{align}
For what follows, it is enough to assume that $r \in [0;d]$ in
\eqref{eq:chap-1:space-of-order-parameters}. Note that, in case
\eqref{eq:product-space}, $d = \sup_{u \in S} u^2$.

Now, let us define the non-linear functional that appears in
the variational formula of our main result. We do it in three
steps:
\begin{enumerate}
\item Given large enough $M \in \R_+$, define the regularised derivative
$D^{\prime,M}: \R_+ \to \R$ of the correlator $D$ as
\begin{equation}
\label{eq:regularized-derivative}
D^{\prime,M}(r)
:=
\begin{cases}
D^{\prime}(r)
,
&
r \in [1/M;+\infty)
,
\\
M
,
&
r \in [0;1/M)
.
\end{cases}
\end{equation}
Given $r, M \in \R_+$, define the function $\theta_r^{(M)} : [-r;r] \to \R$ as
\begin{align}
\label{eq:theta-def}
\theta_r^{(M)}(q)
:=
q D^{\prime,M}(2(r-q))+\frac{1}{2}D(2(r-q))
,
\quad
q \in [-r;r]
.
\end{align}

\item
Given $r \in \R_+$,  $x
\in
\mathcal{X}(r)$ and
the (regular enough) \emph{boundary condition} $h: \R \to \R$, consider the semi-linear parabolic \emph{Parisi's terminal
value problem}:
\begin{align}
\label{eq:parisi-pde}
\begin{cases}
\partial_q f(y,q)
+
\frac{1}{2}
D^{\prime,M}(2(r-q))
\left(
\partial^2_{qq} f(y,q)
+
x(q)
\left(
\partial_{y} f(y,q)
\right)^2
\right)
=
0
,
&
(y,q)
\in
\R \times (0,r)
,
\\
f(y,1)
=
h(y)
,
&
y \in \R
.
\end{cases}
\end{align}
Let $f_{r,x,h}^{(M)}:[0;1] \times \R_+ \to \R$ be the unique solution of
\eqref{eq:parisi-pde}. Solubility of the Parisi terminal value problem \eqref{eq:parisi-pde}, its relation to the Hamilton-Jacobi-Bellman equations and stochastic control problems  is discussed in a more general multidimensional context
in~\cite[Section~6]{BovierKlimovskyAS2Guerra2008}.

\item Given the family of the (regular
enough for \eqref{eq:parisi-pde} to be solvable) \emph{boundary conditions}
\begin{equation}
\label{eq:boundary-conditions}
g :=
\{g_\lambda: \R \to \R \mid \lambda \in \R \}
,
\end{equation}
and given $r \in [0;d]$, define the 
\emph{local Parisi functional}
$
\mathcal{P}(\beta,r,g): \mathcal{X}(r) \to \R
$
as
\begin{align}
\label{eq:local-parisi-functional}
\mathcal{P}(\beta,r,g)[x]
:=
\lim_{M \uparrow +\infty}
\left(
\inf_{\lambda \in \R}
\left[
f_{r,x,g_\lambda}^{(M)}(0,0)
-\lambda r
\right]
-
\frac{\beta^2}{2}
\int_0^1
x(q)
\dd \theta_r^{(M)}(q)
\right)
,
\quad
x \in \mathcal{X}(r)
.
\end{align}
In \eqref{eq:local-parisi-functional}, the integral with respect to $\theta_r^{(M)}$ is
understood in the Lebesgue-Stiltjes sense.
\end{enumerate}

\textbf{Main results.} Let us start by recording the basic convergence result
for the log-partition function.
\begin{theorem}[Existence of the limiting free energy]
\label{thm:existence-of-limiting-free-energy}
For any $\beta>0$, the large $N$-limit of the log-partition function exists and
is a.s. deterministic:
\begin{align}
\label{eq:free-energy-convergence}
p_N(\beta)
\xlongrightarrow[N \uparrow +\infty]{}
p(\beta)
,
\quad
\text{almost surely and in $L^1$.}
\end{align}
In addition, for any $N \in \N$, the following concentration of measure inequality holds
\begin{equation}
\label{eq:concentratino-inequality}
\P
\left\{
|
p_N(\beta)
-
\E\left[
p_N(\beta)
\right]
|
>
t
\right\}
\leq
2 \exp
\left(
- \frac{
N t^2
}{
4 D(d)
}
\right)
,
\quad
t \in \R_+
.
\end{equation}
\end{theorem}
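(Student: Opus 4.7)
I would handle the concentration estimate first, then establish convergence of $\E[p_N(\beta)]$ via a Guerra--Toninelli-type Gaussian interpolation, and finally combine the two to obtain the almost-sure and $L^1$ statements.

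For the concentration inequality \eqref{eq:concentratino-inequality}, I would apply Gaussian concentration (Borell--Tsirelson--Ibragimov--Sudakov) to $p_N(\beta)$ viewed as a smooth functional of the Gaussian field $X_N$. Formally, the Gâteaux derivative is
\[
\frac{\delta p_N(\beta)}{\delta X_N(u)} \;=\; \frac{\beta}{\sqrt{N}}\, G_{N,\beta}(u),
\]
where $G_{N,\beta}$ is the Gibbs probability density $\exp(\beta\sqrt{N} X_N(u))/Z_N(\beta)$ with respect to $\mu_N$. Since $G_{N,\beta}$ integrates to one and, by \eqref{eq:incremental-covariance} together with $X_N(0)=0$, $\sup_{u\in S_N}\var(X_N(u)) = \sup_{u\in S_N} D(\|u\|_2^2/N) \le D(d)$, the Cameron--Martin norm of the gradient is controlled by a quantity of order $\beta\sqrt{D(d)/N}$. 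The standard log-Laplace (Herbst) argument then delivers a subgaussian tail of the stated form.

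For the convergence $\E[p_N(\beta)] \to p(\beta)$, I would attempt a Gaussian interpolation. Split $N = N_1 + N_2$, identify $S^N$ with $S^{N_1}\times S^{N_2}$, and interpolate between $X_N$ and the direct sum $X^{(1)}_{N_1}(u_1) + X^{(2)}_{N_2}(u_2)$ of two independent copies of the same type of field: set
\[
\phi(t) \;:=\; \frac{1}{N}\,\E\log\int_{S^N} \exp\!\Bigl(\sqrt{t}\,\beta\sqrt{N}\,X_N(u) + \sqrt{1-t}\,\beta\bigl(\sqrt{N_1}\,X^{(1)}_{N_1}(u_1) + \sqrt{N_2}\,X^{(2)}_{N_2}(u_2)\bigr)\Bigr)\,\dd\mu_N(u),
\]
so that $\phi(1) = \E[p_N(\beta)]$ and $\phi(0) = (N_1/N)\,\E[p_{N_1}(\beta)] + (N_2/N)\,\E[p_{N_2}(\beta)]$. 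Gaussian integration by parts expresses $\phi'(t)$ as a two-replica Gibbs average of the difference between the two covariance structures, written in terms of the self-overlaps $\|u_i\|_2^2/N_i$ and the overlaps $\langle u_i,v_i\rangle/N_i$. I expect this step to be the main obstacle: the nonlinear action of $D$ on $(\|u_1\|_2^2 + \|u_2\|_2^2)/N$ prevents a clean splitting, and one must absorb the resulting mismatch into an $o(1)$ correction, exploiting the admissibility structure from Theorem~\ref{thm:characterisation-correlation-functions}, so as to deduce approximate super- or subadditivity of $N \mapsto N\,\E[p_N(\beta)]$ and hence convergence to a deterministic limit $p(\beta)$.

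With mean convergence and \eqref{eq:concentratino-inequality} in hand, the remaining assertions are routine. For every $t>0$, the series $\sum_N \exp(-N t^2/(4D(d)))$ converges, so by Borel--Cantelli $p_N(\beta) - \E[p_N(\beta)] \to 0$ almost surely, which combined with $\E[p_N(\beta)] \to p(\beta)$ yields the almost-sure convergence in \eqref{eq:free-energy-convergence}. The same subgaussian bound forces uniform integrability of $\{p_N(\beta)\}_N$, upgrading the convergence to $L^1$.
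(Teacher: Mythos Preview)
Your concentration argument and the Borel--Cantelli/uniform-integrability wrap-up are correct and match the paper. The substantive point is the interpolation step, where you and the paper diverge.

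The paper handles the ``nonlinear action of $D$'' by \emph{localisation}: it restricts each block $u^{(k)}$ to a thin spherical shell $\{\Vert u^{(k)}\Vert_2^2/N_{K-1}\in V_i\}$ of width~$\varepsilon$ (cf.~\eqref{eq:restricted-state-space-product}), forcing all block self-overlaps to lie within $\varepsilon$ of one another. Then the variance mismatch is bounded by $D(\varepsilon)$, and concavity of $D$ gives a one-sided bound on the covariance mismatch, yielding the approximate localised superadditivity~\eqref{eq:restricted-block-monotonicity}; a further delocalisation step via~\eqref{eq:concentratino-inequality} recombines the shells.

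Your non-localised route actually works more cleanly than you anticipate, and there is no $o(1)$ correction to absorb. Using \eqref{eq:long-range-covariance} (which, with $X_N(0)=0$, covers both cases of Theorem~\ref{thm:characterisation-correlation-functions}), the one-replica variance contributions and the self-overlap parts of the two-replica covariance contributions cancel \emph{exactly} under the Gibbs average by replica exchangeability, leaving
\[
\phi'(t)\;=\;\frac{\beta^2}{4}\,\E\Bigl\langle D\Bigl(\tfrac{\Vert u-v\Vert_2^2}{N}\Bigr)-\sum_{i}\tfrac{N_i}{N}\,D\Bigl(\tfrac{\Vert u_i-v_i\Vert_2^2}{N_i}\Bigr)\Bigr\rangle\;\ge\;0,
\]
by Jensen's inequality and the concavity of $D$ (Remark~\ref{rem:correlator-concavity}), since $\Vert u-v\Vert_2^2/N=\sum_i(N_i/N)\,\Vert u_i-v_i\Vert_2^2/N_i$. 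This gives exact superadditivity of $N\mapsto N\,\E[p_N(\beta)]$ (boundedness above follows from $\E[p_N]\le\log\mu(S)+\tfrac{\beta^2}{2}D(d)$ via Jensen), hence convergence. The gap in your proposal is that you stop at ``exploiting the admissibility structure'' without naming the mechanism: it is precisely the concavity of $D$, combined with replica symmetry, that makes the self-overlap terms disappear and fixes the sign.
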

The main result of this work is the following variational representation for the
limiting log-partition function in terms of the Parisi
functional~\eqref{eq:local-parisi-functional}.
\begin{theorem}[Free energy variational representation, comparison with cascades]
\label{thm:free-energy}
Assume \eqref{eq:product-space}. Let the family of boundary conditions \eqref{eq:boundary-conditions} be defined as
\begin{align}
\label{eq:product-space-terminal-condition}
g_{\lambda}(y)
:=
\log
\int_S
\mu(\dd u)
\exp
\left(
\beta u y
+
\lambda u^2
\right)
,
\quad
y \in \R
.
\end{align}

Then, for all $\beta \in \R$,
\begin{align}
\label{eq:free-energy-variational-plus-remainder-term}
p(\beta)
:=
\sup_{r \in [0;d]} \inf_{x \in \mathcal{X}(r)}
\left(
\mathcal{P}(\beta, r, g)[x]
-
\mathcal{R}(r)[x]
\right)
,
\quad
\text{almost surely and in $L^1$,}
\end{align}
where the \emph{remainder term} $\mathcal{R}(r): \mathcal{X}(r) \to \R_+$ is a
functional on $\mathcal{X}(r)$ taking non-negative values (see
\eqref{eq:remainder:local-limiting-guerra-remainder} for the definition). 
\end{theorem}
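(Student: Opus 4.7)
The plan is to establish \eqref{eq:free-energy-variational-plus-remainder-term} by (i) stratifying $S_N$ according to the radial coordinate $\|u\|_2^2/N$ and using a Laplace argument to reduce to a shell-wise problem, and (ii) performing a Guerra-type interpolation on each shell against a Ruelle Probability Cascade (RPC) with functional order parameter $x$, tracking the interpolation remainder explicitly as the non-negative correction $\mathcal{R}(r)[x]$. The overall scheme is modelled on the local comparison strategy for Gaussian fields with non-constant variance developed in \cite{BovierKlimovskyAS2Guerra2008}; by symmetry of the Gaussian law one may restrict to $\beta \geq 0$.

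First, I would partition $[0,d]$ into sub-intervals $I_k$ of width $\delta$ centred at $r_k \in [0,d]$, and split $S_N = \bigsqcup_k S_N^{(k)}$ with $S_N^{(k)} := \{u \in S_N : \|u\|_2^2/N \in I_k\}$. Since only $O(1/\delta)$ shells contribute while each $Z_N^{(k)}$ is of exponential size in $N$, one has $p_N(\beta) = \max_k \tfrac{1}{N}\log Z_N^{(k)}(\beta) + O(N^{-1}\log N)$. On the shell at radius $\sqrt{Nr}$, using $X_N(0)=0$ and \eqref{eq:incremental-covariance}, the covariance reduces to $\E[X_N(u)X_N(v)] = D(r) - \tfrac{1}{2} D(2(r-q_{u,v})) + O(\delta)$ with $q_{u,v} := \langle u,v\rangle/N$, depending only on the overlap and matching exactly the kernel encoded in $\theta_r^{(M)}$. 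I would then run Guerra's interpolation between the original Hamiltonian $\sqrt{N}X_N(u)$ at $t=1$ and a cavity-type Hamiltonian at $t=0$ built from an RPC weighted by $x \in \mathcal{X}^\prime_n(r_k)$ superimposed on independent Gaussian pieces whose covariance matches $\theta_{r_k}^{(M)}$. A Lagrange multiplier $\lambda$ enforces the radial constraint $\|u\|_2^2/N \approx r_k$ and produces the boundary term $\inf_{\lambda}[f_{r,x,g_\lambda}^{(M)}(0,0) - \lambda r]$ through the semi-linear Parisi terminal value problem \eqref{eq:parisi-pde}. Gaussian integration by parts for $\tfrac{\dd}{\dd t}\tfrac{1}{N}\E[\log Z_N^{(k)}(t)]$, integrated over $t \in [0,1]$, then yields
\begin{equation*}
\tfrac{1}{N}\E[\log Z_N^{(k)}(\beta)] = \mathcal{P}(\beta, r_k, g)[x] - \mathcal{R}_N(r_k)[x] + o_N(1),
\end{equation*}
with $\mathcal{R}_N \to \mathcal{R}$ as $N \uparrow +\infty$. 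Combined with the concentration bound \eqref{eq:concentratino-inequality}, this upgrades convergence in expectation to almost-sure and $L^1$ convergence.

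The main obstacle will be verifying $\mathcal{R}(r)[x] \geq 0$ for arbitrary admissible correlators $D$, since the Guerra remainder does not inherit a definite sign from a single convexity statement as it does for classical SK with $\xi(q) = q^2$: the quantity $\theta_r^{(M)}$ involves both $D$ and $D^{\prime,M}$ and can change curvature on $[0, 2r]$. My approach is to use the Schoenberg-type characterisation (Theorem~\ref{thm:characterisation-correlation-functions}) to split the integrand of $\mathcal{R}$ into pieces with controlled sign, invoke the one-step Gaussian comparison lemmata of \cite{BovierKlimovskyAS2Guerra2008} on each piece with the regularisation $D^{\prime,M}$ ensuring integrability near $q=r$, and finally pass $M \uparrow +\infty$, $\delta \downarrow 0$, and extend the inner infimum from $\mathcal{X}^\prime_n(r)$ to $\mathcal{X}(r)$ by a standard density and continuity argument. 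The outer $\sup_r$ then comes from selecting the dominant shell, while the inner $\inf_x$ simply expresses that the Guerra identity holds for every $x$ with $\mathcal{R} \geq 0$, so the infimum is attained in the limit.
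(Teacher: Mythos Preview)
Your overall architecture---localise to thin spherical shells, run a Guerra interpolation against an RPC-weighted cavity field, recover the Parisi terminal value problem via a Lagrange multiplier, and upgrade to almost-sure convergence via the concentration bound---is exactly the paper's strategy (Sections~\ref{sec:as2:guerras-scheme}--\ref{sec:regularisation-and-localisation}). So the skeleton is right.

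Where you go astray is in identifying the ``main obstacle''. You write that the Guerra remainder ``does not inherit a definite sign from a single convexity statement'' and propose to decompose it via the Schoenberg representation. This is a misconception. On the shell $\|u\|_2^2 = \|v\|_2^2 = Nr$, the covariance of $X_N$ becomes, up to $O(\eps)$, $G_r(q_{u,v})$ with $G_r(q) = D(r) - \tfrac{1}{2}D(2(r-q))$ (this is \eqref{eq:spherically-restricted-correlator}--\eqref{eq:effective-sk-correlator}). The interpolation remainder integrand is precisely the Bregman-type expression
\[
G_r(q_{u,v}) - G_r(q(\alpha^{(1)},\alpha^{(2)})) - G_r'(q(\alpha^{(1)},\alpha^{(2)}))\bigl(q_{u,v} - q(\alpha^{(1)},\alpha^{(2)})\bigr),
\]
cf.~\eqref{eq:remainder-integrand}--\eqref{eq:remainder-convexity}. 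Now $G_r''(q) = -2D''(2(r-q))$, and every admissible correlator $D$ is concave on $(0,\infty)$ by the integral representations in Theorem~\ref{thm:characterisation-correlation-functions} (Remark~\ref{rem:correlator-concavity}). Hence $G_r$ is convex and the remainder is automatically non-negative---one line, exactly as in the classical SK case. The quantity $\theta_r^{(M)}$ you worry about is not the object whose convexity matters; it enters only in the \emph{linear} part of the interpolation derivative (the piece that produces the $\int x\,\dd\theta$ term in the Parisi functional), not in the sign of $\mathcal{R}$. The regularisation $D^{\prime,M}$ is there to handle the possible blow-up $D'(0+)=+\infty$ in the long-range case (Remark~\ref{rem:correlator-singularity}), not to repair any sign issue; Lemma~\ref{lem:regularisation-works} shows the divergent pieces cancel between the two parts of $\mathcal{P}$.
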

The sign-definiteness of the remainder term $\mathcal{R}(r)$ immediately implies the following
bound.
\begin{corollary}[Log-partition function upper bound]
\label{thm:free-energy-upper-bound}
For all $\beta \in \R$,
\begin{align}
\label{eq:free-energy-upper-bound}
p(\beta)
\leq
\sup_{r \in [0;d]} \inf_{x \in \mathcal{X}(r)}
\mathcal{P}(\beta, r, g)[x]
,
\quad
\text{almost surely.}
\end{align}
\end{corollary}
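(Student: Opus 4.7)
The plan is immediate: the corollary is a one-line consequence of Theorem~\ref{thm:free-energy} together with the sign-definiteness of the remainder functional, and no new analytical input is required.

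Concretely, I would start from the exact identity provided by Theorem~\ref{thm:free-energy}, namely
\begin{equation*}
p(\beta) = \sup_{r \in [0;d]} \inf_{x \in \mathcal{X}(r)} \left( \mathcal{P}(\beta, r, g)[x] - \mathcal{R}(r)[x] \right), \quad \text{almost surely},
\end{equation*}
and then discard the non-negative remainder. The assertion of Theorem~\ref{thm:free-energy} includes $\mathcal{R}(r)[x] \geq 0$ for every admissible $r$ and $x$, so pointwise in $(r,x)$ one has $\mathcal{P}(\beta,r,g)[x] - \mathcal{R}(r)[x] \leq \mathcal{P}(\beta,r,g)[x]$.

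From here I would invoke monotonicity of the infimum and supremum with respect to pointwise ordering: if $A(x) \leq B(x)$ for all $x \in \mathcal{X}(r)$, then $\inf_{x} A(x) \leq \inf_{x} B(x)$, and likewise taking $\sup_{r \in [0;d]}$ of both sides preserves the inequality. Applying these two steps in succession to the identity above yields
\begin{equation*}
p(\beta) \leq \sup_{r \in [0;d]} \inf_{x \in \mathcal{X}(r)} \mathcal{P}(\beta, r, g)[x], \quad \text{almost surely},
\end{equation*}
which is \eqref{eq:free-energy-upper-bound}. The almost-sure event on which this bound holds is precisely the almost-sure event on which Theorem~\ref{thm:free-energy} holds, so the probabilistic bookkeeping is automatic.

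There is no genuine obstacle. The only non-trivial ingredient is the sign-definiteness claim $\mathcal{R}(r)[x] \geq 0$, but that is built into the statement of Theorem~\ref{thm:free-energy} and will be established when that theorem is proved (via the Guerra-type interpolation in Section~\ref{sec:proof-saddle-point-product-case}); once it is in hand, the corollary follows by the trivial monotonicity argument above.
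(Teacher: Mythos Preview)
Your argument is correct and coincides with the paper's own reasoning: the corollary is stated there as an immediate consequence of Theorem~\ref{thm:free-energy} and the non-negativity of the remainder $\mathcal{R}(r)[x]$, with no further proof given. There is nothing to add.
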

\begin{remark}
\label{rem:correlator-singularity} In the
case~\eqref{eq:long-range-incremental-covariance-representation}, the field
\eqref{eq:spherically-restricted-correlator} has a feature, which is not within
the assumptions typically found in the literature
\cite{Guerra2003a,Guerra-Toninelli-Generalized-SK-2003,TalagrandParisiFormula2006,TalagrandSphericalSK,panchenko-free-energy-generalized-sk-2005}: the correlator $D$ is not of class $C^1$, namely, $D$ can have a singular derivative at $0$. To deal with the singularity, we need a regularisation procedure, cf.~\eqref{eq:regularized-derivative} and \eqref{eq:local-parisi-functional}.
\end{remark}

\textbf{Heuristics.} It is natural to ask the following questions: Why is Parisi's theory of
hierarchical replica symmetry breaking \cite{MezardParisiVirasoro1987} (which is
usually behind the functionals of the type~\eqref{eq:local-parisi-functional}) applicable to
Gaussian fields with isotropic increments satisfying
\eqref{eq:incremental-covariance}?  Where are the ``interacting spins'' in the
present context?

A hint is given by the following observation. Define 
\begin{equation}
\label{eq:overlap}
\langle
u,v 
\rangle_N
:=
\frac{1}{N}
\sum_{i=1}^N
u_i
v_i
,
\quad
u,v \in \R^N
.
\end{equation}
Let us fix $r \in [0;d]$.
By~\eqref{eq:long-range-covariance}, the restriction of the field $X_N$ with
isotropic increments to a sphere with radius $r$ centred at the origin, leads to
the \emph{mixed $p$-spin spherical SK model} (cf.~\cite{TalagrandSphericalSK})
with the following covariance structure
\begin{equation}
\label{eq:spherically-restricted-correlator}
\E
\left[
X_N(u) X_N(v)
\right]
=
D(r)
-
\frac{1}{2}
D(2 (r - \langle u, v \rangle_N))
=:
G_r(\langle u, v \rangle_N)
,
\quad
\Vert u \Vert_2^2 = \Vert v \Vert_2^2 = r N
, 
\end{equation}
where $G_r: \R_+ \to \R$ is given by
\begin{equation}
\label{eq:effective-sk-correlator}
G_r(q) := D(r)-\frac{1}{2}D(2(r-q))
,
\quad 
q \in \R_+
.
\end{equation}
Thus, \eqref{eq:spherically-restricted-correlator} implies that, given $r$, each
field of the type \eqref{eq:incremental-covariance} induces a mixed $p$-spin
spherical SK model with the convex correlation function $G_r$ (see
Remark~\ref{rem:correlator-concavity}). It is this convexity that leads to the
sign-definiteness of the remainder term in
\eqref{eq:as2:guerras-interpolation-speed} and allows for the proof (along the
lines of \cite{TalagrandParisiFormula2006}) of
Theorems~\ref{thm:parisi-type-formula} and \ref{thm:fyodorov-sommers-formula}
for all admissible correlators. 

Our proof of Theorem~\ref{thm:free-energy} exploits the observation
\eqref{eq:spherically-restricted-correlator} and combines it with the
localisation technique of \cite{BovierKlimovskyAS2Guerra2008}. By means of the
large deviations principle, this technique reduces the analysis of the full
log-partition function~\eqref{eq:log-partition-function} to the \emph{local}
one, where \eqref{eq:spherically-restricted-correlator} approximately holds true
everywhere. The price to pay for this reduction is the saddle point variational
principle \eqref{eq:free-energy-variational-plus-remainder-term}, which involves
the Lagrange multipliers that enforce the localisation.

\section{Existence of the limiting free energy}
\label{sec:existence-free-energy}

In this section, we prove Theorem~\ref{thm:existence-of-limiting-free-energy}.

\begin{proof}[Proof of Theorem~\ref{thm:existence-of-limiting-free-energy}]

Proof of \eqref{eq:concentratino-inequality}.
By Remark~\ref{rem:short-long-range-cases}, we have
\begin{align}
\label{eq:variance-rough-upper-bound}
\var \left[
X_N(u)
\right]
=
D_N(\Vert u \Vert_2^2)
\leq
D(d)
,
\quad
u \in \R^N
.
\end{align}
Therefore, the concentration of measure inequality
\eqref{eq:concentratino-inequality} follows from
\cite[Proposition~2.2]{BovierKlimovskyAS2Guerra2008}.

Proof of the convergence \eqref{eq:free-energy-convergence}. The result
can be proved along the lines
of~\cite[Theorem~1]{Guerra-Toninelli-Generalized-SK-2003}. In  \cite[eq.
(7)]{Guerra-Toninelli-Generalized-SK-2003}, it is assumed that the covariance
structure of the random potential depends on the scalar product (overlap) of the
particle configurations in a smooth way. Therefore, using the terminology of
Remark~\ref{rem:short-long-range-cases}, only the short-range case is covered
by~\cite[Theorem~1]{Guerra-Toninelli-Generalized-SK-2003}. Indeed, in that case,
the covariance of the field $X_N$
satisfies~\eqref{eq:stationary-correlation-structure}, where the function $B$ is
analytic and convex, which follows from the
representation~\eqref{eq:correlation-function-representation}. Therefore,
\cite[Theorem~1]{Guerra-Toninelli-Generalized-SK-2003} is applicable with
$Q_N(u,v) := N^{-1} \Vert u-v \Vert_2^2$, for $u,v \in \R^N$.

In the long-range case \eqref{eq:long-range-covariance}, the proof of the
\cite{Guerra-Toninelli-Generalized-SK-2003} requires some care, because the
covariance structure of the field $X_N$ (cf.~\eqref{eq:long-range-covariance})
does not depend on the scalar product \eqref{eq:overlap} only, and, moreover,
the correlator $D$ is not of class $C^1$
(cf.~Remark~\ref{rem:correlator-singularity}). For the reader's convenience, we
now retrace the main parts of this argument. Given $N \in \N$, we prove the
convergence of \eqref{eq:free-energy-convergence} along the subsequences $\{ N_K
:= N^K \}_{K \in \N}$. Convergence along other subsequences then readily
follows. Consider $N$ independent copies $\{ X^{(k)}_{N_{K-1}} \mid k \in [N]
\}$ of the field $X_{N_{K-1}}$. Given an interval $V \subset [0;d]$, define the
\emph{localised state space} as
\begin{equation}
\label{eq:restricted-state-space}
S_N(V) := 
\left\{
u \in S_N
:
\Vert u \Vert_2^2 \in N \cdot V
\right\}
.
\end{equation}
Given a random field $C = \{C_N(u) \mid u \in \R^N \}$, denote the corresponding
\emph{local partition function} by
\begin{equation}
\label{eq:restricted-partition-function}
Z_N(\beta, V)[C]
:=
\int_{S_N(V)} \mu_N(\dd u) \exp \left( \beta \sqrt{N} C_N(u) 
\right)
.
\end{equation}
In what follows, for $u \in \R^N$, $v \in \R^M$,  we denote by $u
\shortparallel v$ the vector in $\R^{N+M}$ obtained by concatenation of $u$
and $v$.
Define the Gaussian field $Y$ as
\begin{equation}
\label{eq:existence-comparison-field}
Y_{N,K}(u^{(1)} \shortparallel u^{(2)} \shortparallel \ldots \shortparallel
u^{(N)}) :=
\frac{1}{\sqrt{N}}
\sum_{k=1}^N
X^{(k)}_{N_{K-1}}(u)
,
\quad
u^{(k)} \in \R^{N_{K-1}}
,
\quad
k \in [N]
.
\end{equation}
Due to independence,
\begin{equation}
\label{eq:}
\begin{aligned}
&
\cov
\left[
Y_{N,K}(u^{(1)} \shortparallel u^{(2)} \shortparallel \ldots \shortparallel
u^{(N)})
,
Y_{N,K}(v^{(1)} \shortparallel v^{(2)} \shortparallel \ldots \shortparallel
v^{(N)})
\right]
\\
&
=
\sum_{k=1}^N
\cov
\left[
X^{(k)}_{N_{K-1}}(u^{(k)})
,
X^{(k)}_{N_{K-1}}(v^{(k)})
\right]
,
\quad
u^{(k)}, v^{(k)} \in \R^{N_{K-1}}
,
\quad
k \in [N]
.
\end{aligned}
\end{equation}
Let us define
\begin{equation}
\label{eq:restricted-partition-function-tilde}
\widetilde{Z}_{N_K}(\beta, V)[C]
:=
\int_{\widetilde{S}_{N_K}(V)} \mu_N(\dd u) \exp \left( \beta \sqrt{N} C_N(u) 
\right)
,
\end{equation}
where
\begin{equation}
\label{eq:restricted-state-space-product}
\widetilde{S}_{N_K}(V) 
:= 
\left\{
u = u^{(1)} \shortparallel u^{(2)} \shortparallel \ldots \shortparallel
u^{(N)} \in S_{N_K}
:
\Vert u^{(k)} \Vert_2^2 \in N_{K-1} \cdot V
,
\quad
k \in [N]
\right\}
.
\end{equation}
Let us note that $\widetilde{S}_{N_K}(V) \subset S_{N_K}(V) $, and, therefore,
\begin{equation}
\label{eq:product-less-than-non-product}
Z_{N_K}(\beta, V)
\geq
\widetilde{Z}_{N_K}(\beta, V)
.
\end{equation}
The product structure \eqref{eq:restricted-state-space-product} and independence \eqref{eq:existence-comparison-field}
imply
\begin{equation}
\label{eq:decoupled-free-energy}
\begin{aligned}
\frac{1}{N_K}
\E
\left[
\log
\widetilde{Z}_{N_K}(\beta,V)[Y_{N,K}]
\right]
&
=
\frac{1}{N_K}
\E
\left[
\log
\prod_{k=1}^{N}
Z_{N_{K-1}}(\beta,V)[X^{(k)}_{N_{K-1}}]
\right]
\\
&
=
\frac{1}{N_{K-1}}
\E
\left[
\log
Z_{N_{K-1}}(\beta,V)[X_{N_{K-1}}]
\right]
.
\end{aligned}
\end{equation}
For $\eps > 0$, set $V_i := [i \eps; (i+1)\eps]$, $i \in \N$.
By the Gaussian comparison formula \cite[Proposition~2.5]{BovierKlimovskyAS2Guerra2008},
\begin{equation}
\label{eq:existence-comparison-equality}
\begin{aligned}
\frac{1}{N_K}
\E
&
\left[
\log
\widetilde{Z}_{N_K}(\beta,V_i)[X_{N_K}]
\right]
=
\frac{1}{N_K}
\E
\left[
\log
Z(\beta,V_i)[Y_{N,K}]
\right]
\\
&
+
\frac{\beta^2}{2}
\int_0^1
\dd t
\int_{
\widetilde{S}_{N_K}(V_i)
} 
\widetilde{\mathcal{G}}_{N_K}(t)(\dd u) 
\int_{
\widetilde{S}_{N_K}(V_i)
}
\widetilde{\mathcal{G}}_{N_K}(t) (\dd v)
\left[
\var X_{N_K}(u)
-
\frac{1}{N}
\sum_{k=1}^N
\var X_{N_{K-1}}(u^{(k)})
\right.
\\
&
\left.
-
\left(
\cov \left[X_{N_K}(u), X_{N_K}(v)\right]
-
\frac{1}{N}
\sum_{k=1}^N
\cov \left[X_{N_{K-1}}(u^{(k)}), X_{N_{K-1}}(v^{(k)})\right]
\right)
\right]
,
\end{aligned}
\end{equation}
where $\widetilde{\mathcal{G}}_{N_K}(t) \in \mathcal{M}_1(\widetilde{S}_{N_K})$ is the
interpolating Gibbs measure with the density
\begin{equation}
\label{eq:interpolating-gibbs-measure}
\frac{
\dd \widetilde{\mathcal{G}}_{N_K}(t)
}{
\dd \mu_{N_K}
}
(u)
=
\exp
\left(
\beta
\sqrt{N_K}
\left(
\sqrt{t} X_{N_K}(u)
+
\sqrt{1-t} Y_{N,K}(u)
\right)
\right)
,
\quad
u \in \widetilde{S}_{N_K}(V_i)
.
\end{equation}
Using \eqref{eq:long-range-covariance}, the smoothness of the correlator $D$ on $(0;+\infty)$, the fact that $D$
is non-decreasing, continuous at $0$, and $D(0)=0$, we get
\begin{equation}
\label{eq:variance-comparison}
\sup_{
u \in \widetilde{S}_{N_K}(V_i)
}
\left|
\var X_{N_K}(u)
-
\frac{1}{N}
\sum_{k=1}^N
\var X_{N_{K-1}}(u^{(k)})
\right|
\leq
D(\eps)
,
\quad
i \in \N
.
\end{equation}
As for the covariance terms, the concavity of the correlator $D$ (cf.,
Remark~\ref{rem:correlator-concavity}) and the explicit covariance
representation \eqref{eq:long-range-covariance} assure that
\begin{equation}
\label{eq:covariance-comparison}
\sup_{
u,v \in \widetilde{S}_{N_K}(V_i)
}
\left(
\cov \left[X_{N_K}(u), X_{N_K}(v)\right]
-
\frac{1}{N}
\sum_{k=1}^N
\cov \left[X_{N_{K-1}}(u^{(k)}), X_{N_{K-1}}(v^{(k)})\right]
\right)
\leq
D(\eps)
.
\end{equation}
Therefore, combining \eqref{eq:product-less-than-non-product},
\eqref{eq:decoupled-free-energy}, \eqref{eq:existence-comparison-equality},
\eqref{eq:variance-comparison} and \eqref{eq:covariance-comparison} we get
\begin{equation}
\label{eq:restricted-block-monotonicity}
\frac{1}{N_K}
\E
\left[
\log
Z_{N_K}(\beta,V_i)[X_{N_K}]
\right]
\geq
\frac{1}{N_{K-1}}
\E
\left[
\log
Z_{N_{K-1}}(\beta,V_i)[X_{N_{K-1}}]
\right]
-
C D(\eps)
,
\quad
i \in \N
.
\end{equation}
 The proof is finished by using the concentration inequality
\eqref{eq:concentratino-inequality} to remove the localisation in 
\eqref{eq:restricted-block-monotonicity}, as in
\cite[Theorem~1]{Guerra-Toninelli-Generalized-SK-2003}.

\end{proof}

\section{Comparison with cascades}
\label{sec:proof-saddle-point-product-case}

In this section, we prove Theorem~\ref{thm:free-energy}. The proof follows the
strategy that was previously implemented in
\cite[Section~5]{BovierKlimovskyAS2Guerra2008}. The appearance of the auxiliary
structures below can be made more transparent by the ``cavity'' arguments, as is
done in the seminal work of Aizenman~et~al.~\cite{AizenmanSimsStarr2006}.

\subsection{Auxiliary structures}
\label{sec:auxiliary-structures}

Consider the \emph{auxiliary index space} $\mathcal{A} = \mathcal{A}_n := \N^n$,
$n \in \N$. Let us define the \emph{projection operator} $ \mathcal{A}\ni \alpha
\mapsto [\alpha]_k := (\alpha_1, \ldots, \alpha_k) \in \N^k $, for $k \in [n]$.
It is useful to treat the elements of $\mathcal{A}$ as the \emph{leaves of the
tree} of depth $n$. We use the convention that $[\alpha]_0 = \emptyset$, where
$\emptyset$ denotes the root of the tree. Given a leaf $\alpha \in \mathcal{A}$,
we think of $\{ [\alpha_k] : k \in [n] \}$ as of the sequence of \emph{branches}
connecting the leaf $\alpha$ to the root $\emptyset$. We equip $\mathcal{A}$
with a random measure called \emph{Ruelle's probability cascade} (RPC). Let us
briefly recall the construction of the RPC, see, e.g.,
\cite{AizenmanSimsStarr2006} for more details. Note that each function $x \in
\mathcal{X}^\prime_n(r)$ can be represented as
\begin{align}
\label{eq:review:discrete-order-parameter-indicator-representation}
x(q)
=
\sum_{i=0}^n
x_{i}
\I_{
[q_i; q_{i+1})
}
(r)
,
\end{align}
where $\bar{x} = \{x_k\}_{k=0}^{n+1}$ and $\bar{q} = \{ q_k\}_{k=0}^{n+1}$
satisfy
\begin{equation}
\label{eq:review:discrete-order-parameter}
\begin{aligned}
0 =: x_0 < x_1 < \ldots < x_{n} < x_{n+1} := 1
,
\\
0 =: q_0 < q_1 < \ldots < q_n < q_{n+1} := r
.
\end{aligned}
\end{equation}
To define the RPC, we need only the sequence
$
\bar{x}
$ as in \eqref{eq:review:discrete-order-parameter}.
Consider the family of the independent (inhomogeneous) Poisson point
processes $
\{
\xi_{
k
,
[\alpha]_{k-1}
}
\mid
\alpha \in \mathcal{A}, k \in [n]
\}
$
on $\R_+$
with intensity
\begin{align}
\label{eq:review:the-ppp}
\R_+
\ni
t
\mapsto
x_k
t^{
-x_k-1
}
\in
\R_+
,
\quad
k \in [1;n] \cap \N
.
\end{align}
To each branch $[\alpha]_k$, $\alpha \in \mathcal{A}$, $k \in [n]$ of
the tree we associate the position of
the $\alpha_k$-th atom (e.g., according to the decreasing enumeration) of the
Poisson point process $\xi_{ k , [\alpha]_{k-1}
}
$. The RPC is the point process
$
\RPC
=
\RPC(x_1, \ldots, x_n)
:=
\sum_{
\alpha \in \mathcal{A}
}
\delta_{\RPC(\alpha)}
$,
where $\RPC(\alpha)$, $\alpha \in \mathcal{A}$ is obtained by multiplying
the random weights attached to the branches along the path connecting the given
leaf $\alpha \in \mathcal{A}$ with the root of the tree:
\begin{align}
\label{eq:review:rpc}
\RPC(\alpha)
:=
\prod_{k=1}^n
\xi_{
k,[\alpha]_{k-1}
}(\alpha_k)
.
\end{align}
Since
$
\sum_{\alpha \in \mathcal{A}}
\RPC(\alpha)
<
\infty
$,
the $\RPC$ can be thought of as a
finite random measure on $\mathcal{A}$ with (abusing the notation)
$\RPC(\{ \alpha \}) := \RPC(\alpha)$, for $\alpha \in \mathcal{A}$. To lighten
the notation, we keep the dependence of the RPC on $\bar{x}$ implicit.

Recall \eqref{eq:restricted-state-space}. Given the sequence $\bar{x}$ as in \eqref{eq:review:discrete-order-parameter}
and any suitable Gaussian field $ C :=
\{
C(u,\alpha)
\mid
u \in S_N
,
\alpha \in \mathcal{A}
\}
$,
let us define the \emph{extended log-partition functional} $
\Phi_N(\bar{x},V)
$
as
\begin{align}
\label{eq:remainder:comparison-functional}
\Phi_N(\bar{x},V)
[C]
:=
\frac{1}{N}
\E
\left[
\log
\left(
\int_{
S_N(V)
}
\mu(\dd u)
\int_{\mathcal{A}}
\RPC(\dd \alpha)
\exp
\left(
\beta
\sqrt{N}
C(u,\alpha)
\right)
\right)
\right]
,
\end{align}
where the $\RPC$ is induced by $\bar{x}$.

Let us use the remaining from the order parameter $x \in \mathcal{X}(r)$ bit of
information, namely, the sequence $\bar{q} = \{q_k \}_{k=0}^{n+1}$, as in
\eqref{eq:review:discrete-order-parameter}, to construct the Gaussian \emph{cavity
fields} indexed by $S_N \times \mathcal{A}$. To this end, define the
\emph{lexicographic overlap} between the configurations
$\alpha^{(1)},\alpha^{(2)} \in \mathcal{A}$ as
\begin{align}
\label{eq:introduction:lexicographic-overlap}
l(\alpha^{(1)},\alpha^{(2)})
:=
\begin{cases}
0
,
&
\alpha^{(1)}_1
\neq
\alpha^{(2)}_1
,
\\
\max
\left\{
k
\in
[N]
:
[\alpha^{(1)}]_k
=
[\alpha^{(2)}]_k
\right\}
,
&
\text{otherwise.}
\end{cases}
\end{align}
Let us define (slightly abusing the notation) the \emph{lexicographic overlap}
$
q
:
\mathcal{A}^2
\to
[0;1]
$
as
\begin{align}
\label{eq:review:limiting-grem-overlap}
q(\alpha^{(1)},\alpha^{(2)})
:=
q_{
l(\alpha^{(1)},\alpha^{(1)})
}
.
\end{align}
Given $\bar{q}$ as in
\eqref{eq:review:discrete-order-parameter}, the \emph{cavity field} is the Gaussian field $ A = A_N^{(M)} = \{A_N(u, \alpha) \mid u \in S_N, \alpha \in \mathcal{A}\} $ such that
\begin{equation}
\label{eq:cavity-field-covariance}
\cov
\left[
A^{(M)}(u,\alpha^{(1)})
,
A^{(M)}(v,\alpha^{(2)})
\right]
=
D^{\prime,M}
\left(
2 (r - q(\alpha^{(1)},\alpha^{(2)}))
\right)
\langle u,v \rangle_N
,
\quad
\alpha^{(1)}, \alpha^{(2)} \in \mathcal{A}
,
\quad
u,v \in S_N
.
\end{equation}
The existence of the cavity field $A$ is guarantied by
the following result.
\begin{lemma}[Existence of the cavity field]
\label{eq:existence-cavity-field}
For any sequence $q$ as in \eqref{eq:review:discrete-order-parameter} and large enough $M \in \R_+$,
there exists the unique (in distribution) Gaussian field satisfying
\eqref{eq:cavity-field-covariance}.
\end{lemma}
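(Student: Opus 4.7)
The plan is to reduce existence to showing that the prescribed covariance kernel on $(S_N \times \mathcal{A})^2$ is positive semi-definite; uniqueness in distribution is then automatic, since a centred Gaussian field is determined by its covariance.

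First, I would factorise the target covariance as a product of two kernels: $K_1(\alpha^{(1)},\alpha^{(2)}) := D^{\prime,M}(2(r-q(\alpha^{(1)},\alpha^{(2)})))$ on $\mathcal{A}^2$ and $K_2(u,v) := \langle u,v\rangle_N$ on $S_N^2$. The kernel $K_2$ is trivially positive semi-definite as a (scaled) Euclidean Gram matrix. By the Schur product theorem, it suffices to establish the same for $K_1$.

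For $K_1$, I would exploit the ultrametric nature of the lexicographic overlap and telescope along the tree filtration:
\begin{equation*}
K_1(\alpha^{(1)},\alpha^{(2)})
=
D^{\prime,M}(2r)
+
\sum_{k=1}^{n}
\left(
D^{\prime,M}(2(r-q_k))
-
D^{\prime,M}(2(r-q_{k-1}))
\right)
\I\{ l(\alpha^{(1)},\alpha^{(2)}) \geq k \}.
\end{equation*}
Each indicator kernel $\I\{l(\alpha^{(1)},\alpha^{(2)}) \geq k\} = \I\{[\alpha^{(1)}]_k = [\alpha^{(2)}]_k\}$ is the Gram matrix of Dirac vectors in $\ell^2(\N^k)$, hence positive semi-definite. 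It therefore suffices to check that every coefficient in the telescoping sum is non-negative: the constant term $D^{\prime,M}(2r) \geq 0$ follows from monotonicity of $D$, while the remaining coefficients are non-negative precisely when $D^{\prime,M}$ is non-increasing on $[0,2r]$. By concavity of the correlator $D$ (cf.\ the concavity remark referenced later in the paper), $D'$ is non-increasing on $(0,\infty)$, and the regularisation \eqref{eq:regularized-derivative} extends this property to all of $\R_+$ as soon as $M \geq D'(1/M)$, which is exactly what ``$M$ large enough'' encodes.

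Having verified positive semi-definiteness of $K_1$ (and hence of the full covariance), existence follows, and one may realise $A^{(M)}$ explicitly as
\begin{equation*}
A^{(M)}(u,\alpha)
=
\frac{1}{\sqrt{N}}
\sum_{k=1}^{n+1}
\sigma_k
\sum_{i=1}^{N}
u_i\, g_{i,k,[\alpha]_{k-1}},
\end{equation*}
with i.i.d.\ standard Gaussians $g_{i,k,\gamma}$ and variances $\sigma_k^2$ given by the non-negative coefficients above. The only delicate step is the monotonicity verification for $D^{\prime,M}$, which is precisely what forces the regularisation \eqref{eq:regularized-derivative}: it prevents $D'$ from blowing up near the origin while preserving the monotonicity required for the covariance to be admissible.
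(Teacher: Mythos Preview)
Your proposal is correct and follows essentially the same route as the paper: both arguments hinge on the non-negativity of the increments $D^{\prime,M}(2(r-q_k)) - D^{\prime,M}(2(r-q_{k-1}))$ (you derive it from concavity of $D$, the paper from the integral representations of Theorem~\ref{thm:characterisation-correlation-functions}, which is the same fact), and both then realise the field explicitly by attaching i.i.d.\ Gaussians to the branches of the tree and combining them linearly with the $u_i$. Your Schur-product framing is a pleasant conceptual wrapper, but once you write down the explicit construction it is redundant---the paper simply gives the construction and checks the covariance directly.
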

\begin{proof}
Since the distribution of the Gaussian field is completely identified by the
covariance, the uniqueness follows once we prove the existence. For this
purpose, we first construct the Gaussian field $a  = \{a^{(M)}(\alpha)\}_{\alpha
\in \mathcal{A}}$ with
\begin{align}
\label{eq:small-a-covariance}
\cov
\left[
a^{(M)}(\alpha^{(1)})
,
a^{(M)}(\alpha^{(2)})
\right]
=
D^{\prime,M}
\left(
2 (r - q(\alpha^{(1)},\alpha^{(2)}))
\right)
,
\quad
\alpha^{(1)},\alpha^{(2)} \in \mathcal{A}
.
\end{align}
To construct the field $a^{(M)}$ explicitly, we define
\begin{equation}
\label{eq:m-k-sequence}
m_k
:=
D^{\prime,M}(
2 (r - q_k)
)
,
\quad
k \in [n+1]
.
\end{equation}
The representations
\eqref{eq:short-range-incremental-covariance-representation} and
\eqref{eq:long-range-incremental-covariance-representation}, guarantee that the
sequence \eqref{eq:m-k-sequence} is non-decreasing. Therefore, we can set
\begin{equation}
\label{eq:a-field-explicit-representation}
a^{(M)}(\alpha)
:=
\sum_{k=1}^n
\left(
m_{k+1}
-
m_{k}
\right)^{1/2}
g^{(k)}_{[\alpha]_k}
,
\quad
\alpha \in \mathcal{A}
,
\end{equation}
where
$
\{
g^{(k)}_{[\alpha]_k}
\mid
\alpha \in \mathcal{A},\ k \in [n]
\}
$
are i.i.d.~standard normal random variables. A straightforward check shows
that the covariance structure of
\eqref{eq:a-field-explicit-representation} satisfies
\eqref{eq:small-a-covariance}.

To finish the construction, for $i  \in [N]$, let $a_i^{(M)} =
\{a_i^{(M)}(\alpha)\}_{\alpha \in \mathcal{A}}$ be the i.i.d.~copies of the field $a^{(M)} = \{a^{(M)}(\alpha)\}_{\alpha \in
\mathcal{A}}$. Define
\begin{equation}
\label{eq:cavity-field-explicit-representation}
A_N^{(M)}(u,\alpha)
:=
\frac{1}{\sqrt{N}}
\sum_{i=1}^N
a_i^{(M)}(\alpha)
u_i
,
\quad
u \in S_N
,
\quad
\alpha \in \mathcal{A}
.
\end{equation}
An inspection shows that the field
\eqref{eq:cavity-field-explicit-representation} satisfies
\eqref{eq:cavity-field-covariance}.
\end{proof}

\subsection{Interpolation}
\label{sec:as2:guerras-scheme}

In this section, we shall apply Guerra's comparison scheme
(cf.~\cite{Guerra2003a}) to the Gaussian field with isotropic increments
satisfying \eqref{eq:incremental-covariance}. To this end, we restrict the state
space of a particle to a thin spherical layer. This assures that the variance of
the field $X_N$ does not change much. We refer to this procedure as
\emph{localisation}. Then, we interpolate between the field of interest $X_N$ and
the cavity field \eqref{eq:cavity-field-explicit-representation} and compare the
corresponding local log-partition functions. We use the auxiliary structures
from Section~\ref{sec:auxiliary-structures}.

Given $x \in \mathcal{X}^\prime_n(r)$ and large enough $M \in \R_+$, let us consider the
following \emph{interpolating field} on the extended configuration space $S_N \times
\mathcal{A}$
\begin{align}
\label{eq:remainder:interpolating-hamiltonian}
H_t^{(M)}(u,\alpha)
:=
\sqrt{t}
X_N(u)
+
\sqrt{1-t}
A_N^{(M)}(u,\alpha)
,
\quad
t \in [0;1],
\quad
u \in S_N
,
\quad
\alpha \in \mathcal{A}
,
\end{align}
where $A_N^{(M)}$ is the cavity field with \eqref{eq:cavity-field-covariance}.
In the usual way, the field
\eqref{eq:remainder:interpolating-hamiltonian} induces the
\emph{local log-partition function}
\begin{align}
\label{eq:as2-phi-of-t-definition}
\varphi_N^{(M)}(t,x,V)
:=
\Phi_N(x,V)
\left[
H_t
\right]
,
\quad
V \subset [0;d]
,
\quad
x \in \mathcal{X}^\prime_n(r)
.
\end{align}
At the end-points of the interpolation, we obtain
\begin{align}
\varphi_N^{(M)}(0,x,V)
=
\Phi_N(\bar{x},V)
[A^{(M)}]
\quad
\text{and}
\quad
\varphi_N^{(M)}(1,x,V)
=
\Phi_N(\bar{x},V)
[X]
=:
p_N(\beta, V)
.
\end{align}
The idea is that $\Phi_N(\bar{x},V)[A^{(M)}]$ is computable due to the properties of the RPC and the hierarchical structure of the cavity field. 
Let us now disintegrate the Gibbs measure on $
V
\times
\mathcal{A}
$
induced by \eqref{eq:remainder:interpolating-hamiltonian}
into two Gibbs measures acting on $V$ and $\mathcal{A}$
separately. To this end, we define the correspondent (random) \emph{local
free energy} on $V$ as follows
\begin{align}
\label{eq:as2:psi-of-t-v-alpha}
\psi_N^{(M)}(t,x,\alpha,V)
:=
\log
\int_{
S_N(V)
}
\exp
\left[
\beta \sqrt{N}
H_t^{(M)}(u,\alpha)
\right]
\dd \mu^{\otimes N}(u)
,
\quad
\alpha \in \mathcal{A}
.
\end{align}
For $\alpha \in \mathcal{A}$, let us define the (random) \emph{local
Gibbs measure}
$
\mathcal{G}_N(t,x,\alpha,V)
\in
\mathcal{M}_1(S_N)
$
by specifying its density with respect to the a priori distribution as
\begin{align}
\frac{
\dd
\mathcal{G}_N^{(M)}(t,x,\alpha,V)
}{
\dd
\mu^{\otimes N}
}
(u)
:=
\I_{
S_N(V)
}
(u)
\exp
\left[
\beta \sqrt{N}
H_t^{(M)}(u,\alpha)
-
\psi_N^{(M)}(t,x,V,\alpha)
\right]
,
\quad
u \in S_N
.
\end{align}
Let us define the re-weighting of the RPC by means of the local free
energy
\eqref{eq:as2:psi-of-t-v-alpha}
\begin{align}
\widetilde{\mathrm{RPC}}(\alpha)
:=
\mathrm{RPC}(\alpha)
\exp
\left(
\psi_N^{(M)}(t,x,V,\alpha)
\right)
,
\quad
\alpha \in \mathcal{A}
.
\end{align}
Let us also define the \emph{normalisation operation}
$
\mathcal{N}
:
\mathcal{M}_{\text{finite}}(\mathcal{A})
\to
\mathcal{M}_1(\mathcal{A})
$
as
\begin{align}
\mathcal{N}
\left(
\eta
\right)
(\alpha)
:=
\frac{
\eta(\alpha)
}{
\sum_{
\alpha'
\in
\mathcal{A}
}
\eta(\alpha')
}
,
\quad
\alpha \in \mathcal{A}
,
\quad
\eta = (\eta_\alpha)_{\alpha \in \mathcal{A}}
\in
\mathcal{M}_{\text{finite}}(\mathcal{A})
.
\end{align}
We introduce the \emph{local Gibbs measure}
$
\mathcal{G}_N^{(M)}(t,x,V)
\in
\mathcal{M}_1
(
V \times \mathcal{A}
)
$ as follows.
We equip $V \times \mathcal{A}$ with the product topology between the Borel
topology on $V$ and the discrete topology on $\mathcal{A}$. For any measurable $
\mathcal{U}
\subset
V \times \mathcal{A}
$,
let us put
\begin{align}
\label{eq:as2:desintegrated-gibbs-measure}
\mathcal{G}_N^{(M)}(t,x,V)
\left[
\mathcal{U}
\right]
:=
\sum_{
\alpha
\in
\mathcal{A}
}
\mathcal{N}(\widetilde{\mathrm{RPC}})(\alpha)
\mathcal{G}_N^{(M)}(t,x,\alpha,V)
\{v \in V \mid (v,\alpha) \in \mathcal{U} \}
.
\end{align}
Let us define the \emph{remainder term} as
\begin{equation}
\label{eq:lcture-03:the-remainder-term-definition}
\begin{aligned}
\mathcal{R}_N^{(M)}(t,V)[x]
:=
\frac{\beta^2}{2}
\E
\Big[
&
\int
\mathcal{G}_N^{(M)}(t,x,V)(\dd u, \dd \alpha^{(1)})
\int
\mathcal{G}_N^{(M)}(t,x,V)(\dd v, \dd \alpha^{(2)})
\\
&
\left(
\frac{1}{2}
\left(
D(2(r - q(\alpha^{(1)},\alpha^{(2)})))
-
D(2(r - \langle u,v \rangle_N))
\right)
\right.
\\
&
\left.
-
D^{\prime,M}(2(r -  q(\alpha^{(1)},\alpha^{(2)})))
(
q(\alpha^{(1)},\alpha^{(2)}))
-
\langle u,v \rangle_N
)
\right)
\Big]
.
\end{aligned}
\end{equation}
Given $r \in (0;d]$, let us denote 
\begin{equation}
V_\epsilon := (r-\epsilon; r+\epsilon)
.
\end{equation}
Define the \emph{local remainder term}
as
\begin{align}
\label{eq:remainder:local-limiting-guerra-remainder}
\mathcal{R}^{(M)}(r)[x]
:=
\lim_{
\eps
\downarrow
+0
}
\lim_{
N
\uparrow
+\infty
}
\int_{0}^{1}
\mathcal{R}_N^{(M)}(t,V_\eps)
\dd t
,
\quad
x \in \mathcal{X}^\prime_n(r)
.
\end{align}
The main step in the proof of Theorem~\ref{thm:free-energy} is the following.
\begin{lemma}[Comparison with cascades]
\label{thm:as2:guerras-interpolation}
Given $r \in (0;d]$,
for any $x \in \mathcal{X}^\prime_n(r)$,
as $\eps \downarrow +0$, and $M \uparrow +\infty$,
\begin{align}
\label{eq:as2:guerras-interpolation-speed}
\frac{\partial}{\partial t}
\varphi_N^{(M)}(t,x,V_\eps)
=
&
-
\mathcal{R}^{(M)}(r)[x]
-
\frac{\beta^2}{2}
\sum_{
k=1
}^{
n
}
x_k
\left(
\theta_r^{(M)}(q_{k+1})
-
\theta_r^{(M)}(q_k)
\right)
+
\mathcal{O}(\eps)
+
\mathcal{O}(1/M)
,
\end{align}
where 
\begin{equation}
\label{eq:remainder-positivity}
\mathcal{R}^{(M)}(r)[x]
\geq 0
.
\end{equation}
\end{lemma}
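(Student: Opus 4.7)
The argument is a Guerra-style interpolation in the spirit of~\cite{Guerra2003a,TalagrandParisiFormula2006,BovierKlimovskyAS2Guerra2008,AizenmanSimsStarr2006}: I would compare $X_N$ with the hierarchical cavity field $A_N^{(M)}$ along the interpolation~\eqref{eq:remainder:interpolating-hamiltonian}, localized to the thin sphere layer $V_\eps$ so that the variance of $X_N$ is effectively constant. The first move is to apply the Gaussian comparison formula \cite[Proposition~2.5]{BovierKlimovskyAS2Guerra2008} to $H_t^{(M)}$, expressing $\partial_t \varphi_N^{(M)}(t,x,V_\eps)$ as the difference of a single and a double integral against $\mathcal{G}_N^{(M)}(t,x,V_\eps)$ of the covariance kernels of $X_N$ and $A_N^{(M)}$.

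I would then substitute the explicit covariances from~\eqref{eq:incremental-covariance} and~\eqref{eq:cavity-field-covariance}. On $V_\eps$ one has $\Vert u\Vert_2^2/N = r + \mathcal{O}(\eps)$, so by the polarization identity and smoothness of $D$ on $(0,\infty)$, $\cov(X_N(u),X_N(v)) = D(r) - \tfrac{1}{2}D(2(r-\langle u,v\rangle_N)) + \mathcal{O}(\eps)$. The ``$D(r)$'' contributions cancel between the single and double integrals. Adding and subtracting $\theta_r^{(M)}(q(\alpha^{(1)},\alpha^{(2)}))$ inside the double integral then leaves a term involving only the hierarchical overlap, plus a constant coming from the diagonal variance of $A_N^{(M)}$, minus the remainder $\mathcal{R}_N^{(M)}(t,V_\eps)[x]$ of~\eqref{eq:lcture-03:the-remainder-term-definition}, plus $\mathcal{O}(\eps)$.

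For the sign statement~\eqref{eq:remainder-positivity}, I would rewrite the remainder integrand as the concavity defect $\tfrac{1}{2}[D(a)-D(a+h)+hD'(a)]$ with $a=2(r-q)$ and $h=2(q-\langle u,v\rangle_N)$, which is pointwise nonnegative by the concavity of $D$ (cf.~Remark~\ref{rem:correlator-concavity}). The $\mathcal{O}(1/M)$ corrections come from the replacement $D'\rightsquigarrow D^{\prime,M}$ near the singularity at $q=r$ and do not affect the sign in the limit~\eqref{eq:remainder:local-limiting-guerra-remainder}.

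It remains to identify the ``overlap-only'' contribution with $-\tfrac{\beta^2}{2}\sum_{k=1}^n x_k(\theta_r^{(M)}(q_{k+1})-\theta_r^{(M)}(q_k))$ modulo $\mathcal{O}(1/M)$. Since the integrand $\theta_r^{(M)}(q(\alpha^{(1)},\alpha^{(2)}))$ depends only on the tree overlap, I would integrate out the $u$-coordinates of $\mathcal{G}_N^{(M)}(t,x,V_\eps)$ using the multiplicative tree structure~\eqref{eq:a-field-explicit-representation}--\eqref{eq:cavity-field-explicit-representation} of the cavity field and apply the Bolthausen--Sznitman invariance of the RPC level by level, as in the Aizenman--Sims--Starr cavity method~\cite{AizenmanSimsStarr2006}, so that in the limit $N\uparrow\infty$ the $\alpha$-marginal of the two-replica Gibbs measure becomes the normalized RPC with the standard overlap identity $\P(q(\alpha^{(1)},\alpha^{(2)})=q_k)=x_{k+1}-x_k$. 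Abel's summation by parts then rearranges the resulting expression into the required telescoping sum. The main obstacle is precisely this step: at $t>0$ the field $X_N$ couples all coordinates $u_i$ and breaks the exact product structure of $A_N^{(M)}$, so the RPC invariance has to be implemented asymptotically in $N$ and robustly under the thin-layer localization, which keeps the residual cross-coordinate coupling effectively rank one.
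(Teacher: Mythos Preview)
Your approach is the same as the paper's: Gaussian interpolation formula, localization so that $\cov[X_N(u),X_N(v)]=G_r(\langle u,v\rangle_N)+\mathcal{O}(\eps)$ with $G_r$ as in~\eqref{eq:effective-sk-correlator}, splitting the integrand into a $q(\alpha^{(1)},\alpha^{(2)})$-only piece and the remainder, and then concavity of $D$ (equivalently convexity of $G_r$) for~\eqref{eq:remainder-positivity}.

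The one point where you diverge from the paper is your ``main obstacle'', and there you are making the problem harder than it is. The identity
\[
\E\Big[\int\!\!\int \mathcal{G}_N^{(M)}(t,x,V_\eps)^{\otimes 2}(\dd u\,\dd\alpha^{(1)},\dd v\,\dd\alpha^{(2)})\,
\big(\theta_r^{(M)}(r)-\theta_r^{(M)}(q(\alpha^{(1)},\alpha^{(2)}))\big)\Big]
=\sum_{k=1}^{n}x_k\big(\theta_r^{(M)}(q_{k+1})-\theta_r^{(M)}(q_k)\big)
\]
holds \emph{exactly} for every finite $N$ and every $t\in[0,1]$; no large-$N$ limit is needed (this is~\eqref{eq:linear-part-calculation} in the paper). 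The reason is that after integrating out $u,v$ (the integrand does not depend on them) you are left with the $\alpha$-marginal $\mathcal{N}(\widetilde{\mathrm{RPC}})$, which is the RPC reweighted by $e^{\psi_N^{(M)}(t,x,V,\alpha)}$. The $\alpha$-dependence of $\psi_N^{(M)}$ enters only through the branch variables $\{g^{(k)}_{i,[\alpha]_k}\}$ of~\eqref{eq:a-field-explicit-representation}--\eqref{eq:cavity-field-explicit-representation}; the piece $\sqrt{t}\,X_N(u)$ is constant in $\alpha$ and plays no role. This is precisely the hierarchical structure under which the Bolthausen--Sznitman quasi-stationarity of the RPC preserves the law of the overlap, so the expected overlap distribution of $\mathcal{N}(\widetilde{\mathrm{RPC}})^{\otimes 2}$ is the same as that of the bare RPC. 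In particular the ``cross-coordinate coupling'' induced by $X_N$ is irrelevant here, and there is nothing asymptotic or rank-one to control.
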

\begin{proof}
Fix some $r \in (0;d]$. Using the notation \eqref{eq:effective-sk-correlator} and smoothness of $D$ on $(0;+\infty)$, we have
\begin{equation}
\label{eq:variances-concrete}
\var X(u)
=
G_r(r)+\mathcal{O}(\eps)
,
\quad
\var A(u,\alpha)
=
r G_r^\prime(r)+\mathcal{O}(\eps)
,
\quad
u \in V_\epsilon
,
\quad
\alpha \in \mathcal{A}
.
\end{equation}
and
\begin{equation}
\label{eq:isotropic-comparison-covariance-structures}
\begin{aligned}
\cov \left[
X(u), X(v)
\right]
&
=
G_r(\langle u,v\rangle_N)
,
\\
\cov \left[
A(u,\alpha^{(1)})
,
A(v,\alpha^{(2)})
\right]
&
=
G_r^\prime(q(\alpha^{(1)},\alpha^{(2)}))
\langle u,v \rangle_N
.
\end{aligned}
\end{equation}
Applying the abstract Gaussian interpolation formula (see, e.g., \cite[Proposition~2.5]{BovierKlimovskyAS2Guerra2008}) 
to the field $X_N$ and the cavity field \eqref{eq:cavity-field-explicit-representation}, we obtain
\begin{equation}
\label{eq:interpolation-remainer-semi-abstract}
\begin{aligned}
\frac{\partial}{\partial t}
\varphi_N(t,x,V_\eps(r))
=
\frac{\beta^2}{2}
\E\left[
\int
\mathcal{G}_N(t,x,V)(\dd u, \dd \alpha^{(1)})
\int
\mathcal{G}_N(t,x,V)(\dd v, \dd \alpha^{(2)})
\right.
\\
\left.
\left(
\var X(u) - \var A(u,\alpha)
-
\cov \left[ X(u),X(v) \right] + \cov \left[ A(u,\alpha^{(1)}),A(v,\alpha^{(2)}) \right]
\right)
\right]
+
\mathcal{O}(\eps)
.
\end{aligned}
\end{equation}
Using \eqref{eq:variances-concrete} and \eqref{eq:isotropic-comparison-covariance-structures}, we get
\begin{equation}
\label{eq:remainder-full-integrand}
\begin{aligned}
&
\var X(u) - \var A(u,\alpha)
-
\cov \left[ X(u),X(v) \right] + \cov \left[ A(u,\alpha^{(1)}),A(v,\alpha^{(2)}) \right]
\\
&
=
G_r(r)-r G_r^\prime(r)
-
\left(
G_r(q(\alpha^{(1)},\alpha^{(2)})) 
-
q(\alpha^{(1)},\alpha^{(2)})
G_r^\prime(q(\alpha^{(1)},\alpha^{(2)}))
\right)
\\
&
\quad
-
\left[
G_r(\langle u, v\rangle_N)
-
G_r(q(\alpha^{(1)},\alpha^{(2)}))
-
G_r^\prime(q(\alpha^{(1)},\alpha^{(2)}))
\left(
\langle u, v\rangle_N
-
q(\alpha^{(1)},\alpha^{(2)})
\right)
\right]
.
\end{aligned}
\end{equation}
Comparing \eqref{eq:effective-sk-correlator} and \eqref{eq:theta-def}, we note
\begin{equation}
G_r(q)- s G_r^\prime(q)
=
D(r)
+
\theta_r(q)
,
\quad
q \in \R_+
.
\end{equation}
We have (cf. the proof of \cite[Lemma~5.2]{BovierKlimovskyAS2Guerra2008})
\begin{equation}
\label{eq:linear-part-calculation}
\begin{aligned}
&
\E\left[
\int
\mathcal{G}_N^{(M)}(t,x,V_\eps)(\dd u, \dd \alpha^{(1)})
\int
\mathcal{G}_N^{(M)}(t,x,V_\eps)(\dd v, \dd \alpha^{(2)})
(
\theta_r(r)
-
\theta_r(q(\alpha^{(1)},\alpha^{(2)}))
)
\right]
\\
&=
\E\left[
\int
\mathcal{N}(\widetilde{\mathrm{RPC}})(\dd \alpha^{(1)})
\int
\mathcal{N}(\widetilde{\mathrm{RPC}})(\dd \alpha^{(2)})
(
\theta_r^{(M)}(r)
-
\theta_r^{(M)}(q(\alpha^{(1)},\alpha^{(2)}))
)
\right]
\\
&
=
\sum_{k=1}^n
x_k
(\theta_r^{(M)}(q_{k+1})-\theta_r^{(M)}(q_{k}))
.
\end{aligned}
\end{equation}
By \eqref{eq:effective-sk-correlator},
\begin{equation}
\label{eq:remainder-integrand}
\begin{aligned}
&
G_r(\langle u, v\rangle_N)
-
G_r(q(\alpha^{(1)},\alpha^{(2)}))
-
G_r^\prime(q(\alpha^{(1)},\alpha^{(2)}))
\left(
\langle u, v\rangle_N
-
q(\alpha^{(1)},\alpha^{(2)})
\right)
\\
&
=
\frac{1}{2}
\left(
D(2(r - q(\alpha^{(1)},\alpha^{(2)})))
-
D(2(r - \langle u,v \rangle_N))
\right)
\\
&
\quad
-
D^\prime(2(r -  q(\alpha^{(1)},\alpha^{(2)})))
(
q(\alpha^{(1)},\alpha^{(2)}))
-
\langle u,v \rangle_N
)
.
\end{aligned}
\end{equation}
Combining \eqref{eq:linear-part-calculation}, \eqref{eq:remainder-full-integrand}, \eqref{eq:remainder-integrand} and \eqref{eq:interpolation-remainer-semi-abstract}, we get \eqref{eq:as2:guerras-interpolation-speed}. 
Due to Remark~\ref{rem:correlator-concavity}, the function $G$ is convex. Therefore,
\begin{equation}
\label{eq:remainder-convexity}
G_r(\langle u, v\rangle_N)
-
G_r(q(\alpha^{(1)},\alpha^{(2)}))
-
G_r^\prime(q(\alpha^{(1)},\alpha^{(2)}))
\left(
\langle u, v\rangle_N
-
q(\alpha^{(1)},\alpha^{(2)})
\right)
\geq 0
.
\end{equation}
Inequality~\eqref{eq:remainder-positivity} follows from
\eqref{eq:remainder-convexity}.
\end{proof}

\subsection{Regularisation and localisation}
\label{sec:regularisation-and-localisation}

In this section, we finish the proof of Theorem~\ref{thm:free-energy}.

\begin{lemma}[Regularisation, well-definiteness]
\label{lem:regularisation-works}
For any $x \in \mathcal{X}^\prime_n(r)$, 
\begin{equation}
\label{eq:regularisation-works}
\lim_{M \uparrow +\infty}
\left[
\lim_{x_n \uparrow 1-0}
\left(
\lim_{\epsilon \downarrow +0}
\Phi_N(\bar{x},V_\epsilon)[\tilde{A}]
-
\frac{\beta^2}{2}
\sum_{
k=1
}^{
n
}
x_k
\left(
\theta_r^{(M)}(q_{k+1})
-
\theta_r^{(M)}(q_k)
\right)
\right)
\right]
<
\infty
.
\end{equation}
\end{lemma}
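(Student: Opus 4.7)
The expression in \eqref{eq:regularisation-works} is a difference in which both the shell-restricted $\Phi_N(\bar x,V_\eps)[\tilde A]$ and the subtracted sum diverge to $+\infty$ as $M\uparrow+\infty$; the lemma asserts that in the prescribed order of limits these divergences exactly cancel, leaving a bounded residue. My plan is to locate the divergent contribution from each side and exhibit the cancellation algebraically.

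First, I would compute $\lim_{\eps\downarrow 0}\Phi_N(\bar x,V_\eps)[\tilde A]$ by combining the iterative Gaussian integration over the cavity cascade (as used in Lemma~\ref{thm:as2:guerras-interpolation}) with a Varadhan/Laplace argument for the shell constraint $\tfrac{1}{N}\|u\|_2^2\in V_\eps$: in the $\eps\downarrow 0$ limit, Legendre duality produces a Lagrange multiplier and yields
\begin{equation*}
\lim_{\eps\downarrow 0}\Phi_N(\bar x,V_\eps)[\tilde A]
=
\inf_{\lambda\in\R}\bigl[f^{(M)}_{r,x,g_\lambda}(0,0)-\lambda r\bigr],
\end{equation*}
where $f^{(M)}_{r,x,g_\lambda}$ is the discrete Parisi iterate of \eqref{eq:parisi-pde} with boundary $g_\lambda$ at $q=r$ as in \eqref{eq:product-space-terminal-condition}. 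The outermost cascade level corresponds to the (implicit) jump of $x$ at $q_{n+1}=r$ from $x_n$ to $x_{n+1}=1$; it carries weight $x_{n+1}=1$ and has Gaussian variance $c_{n+1}^2:=D^{\prime,M}(0)-D^{\prime,M}(2(r-q_n))=M-D^{\prime,M}(2(r-q_n))$, which diverges as $M\uparrow+\infty$. Integrating this weight-one level is a Gaussian convolution of the boundary which, by the direct Gaussian computation of $\int e^{\beta u(y+c_{n+1}g)+\lambda u^2}\phi(g)\,dg$, amounts to the shift $g_\lambda\mapsto g_{\lambda+\beta^2 c_{n+1}^2/2}$. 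Substituting $\tilde\lambda:=\lambda+\beta^2 c_{n+1}^2/2$ in the infimum separates off the divergent part:
\begin{equation*}
\inf_{\lambda}\bigl[f^{(M)}_{r,x,g_\lambda}(0,0)-\lambda r\bigr]
=
\inf_{\tilde\lambda}\bigl[\tilde f^{(M)}_{\tilde\lambda}(0,0)-\tilde\lambda r\bigr]
+\frac{\beta^2 c_{n+1}^2 r}{2},
\end{equation*}
where $\tilde f^{(M)}_{\tilde\lambda}$ is the Parisi iterate using only the inner $n$ cascade levels (variances $c_k^2$ for $k=1,\dots,n$) with boundary $g_{\tilde\lambda}$.

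After $x_n\uparrow 1$, the top summand of the subtracted sum becomes $\tfrac{\beta^2}{2}\bigl(\theta_r^{(M)}(r)-\theta_r^{(M)}(q_n)\bigr)=\tfrac{\beta^2}{2}\bigl(rM-q_n D^{\prime,M}(2(r-q_n))-\tfrac12 D(2(r-q_n))\bigr)$, using \eqref{eq:theta-def} and $D(0)=0$. The $\tfrac{\beta^2 rM}{2}$ here cancels the corresponding piece of $\tfrac{\beta^2 c_{n+1}^2 r}{2}=\tfrac{\beta^2 rM}{2}-\tfrac{\beta^2 rD^{\prime,M}(2(r-q_n))}{2}$, leaving the $M$-uniformly bounded residue
\begin{equation*}
\frac{\beta^2}{2}\Bigl[-(r-q_n)D^{\prime,M}(2(r-q_n))+\tfrac12 D(2(r-q_n))\Bigr];
\end{equation*}
since $q_n<r$, the argument $2(r-q_n)$ is bounded away from $0$, so $D^{\prime,M}(2(r-q_n))\to D'(2(r-q_n))<+\infty$ as $M\uparrow+\infty$. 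The remaining Parisi iterate $\tilde f^{(M)}_{\tilde\lambda}(0,0)$ and the truncated sum $\sum_{k=1}^{n-1}x_k\bigl(\theta_r^{(M)}(q_{k+1})-\theta_r^{(M)}(q_k)\bigr)$ likewise involve $D^{\prime,M}$ only at strictly positive arguments and converge to finite limits as $M\uparrow+\infty$, giving \eqref{eq:regularisation-works}.

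The main obstacle will be the first step, namely rigorously identifying $\lim_{\eps\downarrow 0}\Phi_N(\bar x,V_\eps)[\tilde A]$ with the Legendre/Lagrange form above: one must carefully thread the shell-constraint (Varadhan) argument through the iterative RPC integration so that the inner $\mu^{\otimes N}$-integration on a thin spherical layer produces the boundary $g_\lambda$ together with the penalisation $-\lambda r$. Once that identity is in hand, the rest of the proof is the explicit algebraic cancellation of the $\tfrac{\beta^2 rM}{2}$ pieces described above.
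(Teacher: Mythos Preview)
Your plan is correct and lands on the same cancellation as the paper: the divergent part of the top cascade level, $\tfrac{\beta^2}{2}\bigl(M-D'(2(r-q_n))\bigr)r$, exactly matches the divergent part of the $k=n$ summand in the $\theta$-sum once $x_n\uparrow 1$, and everything else involves $D^{\prime,M}$ only at strictly positive arguments. Your Gaussian shift computation $g_\lambda\mapsto g_{\lambda+\beta^2 c^2/2}$ and the ensuing change of variable in the infimum are correct.

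The route differs from the paper's, however. The paper does \emph{not} pass through the Legendre/Lagrange identification at all. It works directly with $\varphi_N^{(M)}(0,x,V_\eps)=\Phi_N(\bar x,V_\eps)[A^{(M)}]$: as $x_n\uparrow 1$ the $n$-th RPC level degenerates to a plain Gaussian expectation, and since $u\in S_N(V_\eps)$ forces $\langle u,u\rangle_N=r+\mathcal{O}(\eps)$, integrating out that Gaussian (variance $m_{n+1}-m_n=M-D'(2(r-q_n))$) yields
\[
\varphi_N^{(M)}(0,x,V_\eps)=\tfrac{\beta^2}{2}\bigl(M-D'(2(r-q_n))\bigr)r+\Phi_N(\bar x,V_\eps)[\tilde A]+\mathcal{O}(\eps)+\mathcal{O}(1-x_n),
\]
which is then compared termwise with $x_n\bigl(\theta_r^{(M)}(r)-\theta_r^{(M)}(q_n)\bigr)$. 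So what you flag as ``the main obstacle'' (threading the shell constraint through the RPC iteration to get the Legendre form) is simply absent from the paper's argument; the thin-shell constraint alone pins $\|u\|_2^2/N$ well enough to read off the divergent constant without any Varadhan/Lagrange step. Your approach buys a more explicit link to the Parisi functional $\mathcal P$ (you effectively reprove that the $M\uparrow\infty$ limit in \eqref{eq:local-parisi-functional} exists), at the cost of invoking Lemma~\ref{lem:ldp}. A small indexing slip: the divergent level carries RPC parameter $x_n$ (sent to $1$), not $x_{n+1}$; there is no separate ``$(n{+}1)$-st'' cascade level in the construction \eqref{eq:a-field-explicit-representation}.
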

\begin{proof}
Recall \eqref{eq:a-field-explicit-representation}. Given $x \in
\mathcal{X}^\prime_n(r)$, large enough given $M > 0$, as $\epsilon \downarrow
+0$ and $x_n \uparrow 1-0$, we have
\begin{equation}
\label{eq:non-linear-term-large-m-asymptotics}
\varphi_N^{(M)}(0,x,V_\eps)
= 
\frac{\beta^2}{2}
\left(M-D^\prime(2(r-q_n))\right)r
+
\Phi_N(\bar{x},V_\eps)[\tilde{A}]
+
\mathcal{O}(\epsilon)
+
\mathcal{O}(1-x_n)
,
\end{equation}
where
$
\tilde{A}(u,\alpha) := \frac{1}{\sqrt{N}}
\sum_{i=1}^N
\tilde{a}_i^{(M)}(\alpha)
u_i
$, and $\{ \tilde{a}_i\} $  are i.i.d. copies of 
\begin{equation}
\tilde{a}^{(M)}(\alpha)
:=
\sum_{k=1}^{n-1}
\left(
m_{k+1}
-
m_{k}
\right)^{1/2}
g^{(k)}_{[\alpha]_k}
,
\quad
\alpha \in \mathcal{A}
.
\end{equation}
Using the definition~\eqref{eq:theta-def},
for large enough given $M > 0$, as $x_n \uparrow 1-0$, we get
\begin{equation}
\label{eq:linear-term-large-m-asymptotics}
x_{n}
\left(
\theta_r^{(M)}(q_{k+1})
-
\theta_r^{(M)}(q_k)
\right)
=
\left(M-D^\prime(2(r-q_n))\right)r
-
\frac{1}{2}D(2(r-q_n))
+
\mathcal{O}(1-x_n)
.
\end{equation}
Combining~\eqref{eq:non-linear-term-large-m-asymptotics} and
\eqref{eq:linear-term-large-m-asymptotics}, we note that the unbounded in $M$
terms in \eqref{eq:regularisation-works} cancel out and therefore \eqref{eq:regularisation-works} holds.
\end{proof}

\begin{lemma}[Localisation, large deviations and cascades]
\label{lem:ldp}
For any $x \in \mathcal{X}^\prime_n(r)$, 
\begin{equation}
\lim_{\epsilon \downarrow +0}
\varphi_N^{(M)}(0,x,V_\eps)
=
\inf_{\lambda \in \R}
\left[
f_{r,x,g_\lambda}^{(M)}(0,0)
-\lambda r
\right]
.
\end{equation}
\end{lemma}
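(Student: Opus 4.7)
The strategy is to remove the shell constraint $\Vert u\Vert_2^2/N \in V_\eps$ by Legendre duality, exploit the product structure of $\mu_N$ and the explicit representation~\eqref{eq:cavity-field-explicit-representation} of the cavity field to factorise the $u$-integral over coordinates, and then invoke the standard recursive computation of RPC averages to identify the result as the Parisi PDE solution $f^{(M)}_{r,x,g_\lambda}(0,0)$, with the Lagrange multiplier $\lambda$ supplying the infimum.

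For the upper bound, I fix $\lambda \in \R$ and use the pointwise Chernoff-type estimate $\I_{V_\eps}(\Vert u\Vert_2^2/N) \leq \exp\bigl(\lambda(\Vert u\Vert_2^2 - rN) + |\lambda|\eps N\bigr)$. Inserted into~\eqref{eq:remainder:comparison-functional}, and using $A^{(M)}(u,\alpha) = N^{-1/2}\sum_{i=1}^N a_i^{(M)}(\alpha) u_i$ together with $\mu_N = \mu^{\otimes N}$, the $u$-integral factorises into a product over coordinates:
\begin{equation*}
\int_{S_N(V_\eps)} \mu^{\otimes N}(\dd u)\, \ee^{\beta\sqrt{N} A^{(M)}(u,\alpha) + \lambda\Vert u\Vert_2^2}
\leq \prod_{i=1}^N \int_S \mu(\dd u_i)\, \ee^{\beta a_i^{(M)}(\alpha) u_i + \lambda u_i^2}
= \prod_{i=1}^N \ee^{g_\lambda(a_i^{(M)}(\alpha))},
\end{equation*}
by the very definition~\eqref{eq:product-space-terminal-condition} of $g_\lambda$. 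Since the $a_i^{(M)}$ are i.i.d.\ in $i$ and are built from the same tree underlying the RPC (cf.~\eqref{eq:a-field-explicit-representation}), the remaining quantity $N^{-1}\E\log\int_\mathcal{A}\RPC(\dd\alpha)\prod_i \ee^{g_\lambda(a_i^{(M)}(\alpha))}$ is computed by the classical recursive integration against the cascade, performed level by level from the leaves back to the root of the tree. That recursion is precisely the discretisation of the Parisi terminal-value problem~\eqref{eq:parisi-pde} with terminal data $g_\lambda$, so the outcome equals $f_{r,x,g_\lambda}^{(M)}(0,0)$ (this identification is carried out in detail in~\cite[Section~5]{BovierKlimovskyAS2Guerra2008} and~\cite{AizenmanSimsStarr2006}). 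Combining with the $\ee^{-\lambda rN + |\lambda|\eps N}$ prefactor yields $\varphi^{(M)}_N(0,x,V_\eps) \leq f^{(M)}_{r,x,g_\lambda}(0,0) - \lambda r + \mathcal{O}(\eps)$; optimising in $\lambda$ and sending $\eps \downarrow 0$ provides the desired upper bound.

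For the matching lower bound, the optimal multiplier $\lambda^\ast$ is the one for which the tilted product measure $\mu_{\lambda^\ast}^{\otimes N}(\dd u) \propto \ee^{\lambda^\ast\Vert u\Vert_2^2}\mu^{\otimes N}(\dd u)$ concentrates the empirical second moment $\Vert u\Vert_2^2/N$ at $r$ in the sense of the LLN --- this is the classical Legendre-dual characterisation of the optimiser. Using the reverse inequality $\I_{V_\eps}(\Vert u\Vert_2^2/N) \geq \ee^{\lambda^\ast(\Vert u\Vert_2^2 - rN) - |\lambda^\ast|\eps N}\I_{V_\eps}(\Vert u\Vert_2^2/N)$, the same factorisation plus Cramér's theorem (which provides the exponential smallness of $\mu_{\lambda^\ast}^{\otimes N}(V_\eps^c)$) yields the matching inequality. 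The main obstacle is to carry out this LDP correction uniformly in the cavity-field randomness; this is exactly where the standing assumption that $0$ lies in the interior of $\conv\supp\mu$ enters, since it guarantees coercivity of $\lambda \mapsto f^{(M)}_{r,x,g_\lambda}(0,0) - \lambda r$ and ensures that the optimiser $\lambda^\ast$ is attained and the tilted a priori measure is non-degenerate.
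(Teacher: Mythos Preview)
Your approach is correct and is essentially the one the paper has in mind: the paper's own proof is a one-line reference to the RPC averaging identity (\cite[(5.27)]{BovierKlimovskyAS2Guerra2008}, \cite[Lemma~6.2]{AizenmanSimsStarr2006}) combined with the quenched large-deviations localisation of \cite[Sections~3--5]{BovierKlimovskyAS2Guerra2008}, and this is precisely your Chernoff/Legendre upper bound plus tilted-measure lower bound, followed by coordinate factorisation and the cascade recursion that produces $f^{(M)}_{r,x,g_\lambda}(0,0)$. One small presentational point: your ``reverse inequality'' for the lower bound is trivially true but does not by itself remove the indicator; the actual argument (which you allude to via Cram\'er) is to compare the restricted integral with the full tilted one and control the $V_\eps^{c}$ contribution by the quenched LDP, which requires sending $N\uparrow\infty$ before $\eps\downarrow 0$ --- consistent with how the lemma is used in the proof of Theorem~\ref{thm:free-energy}.
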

\begin{proof}
This is a standard computation (cf., e.g.,~\cite[Lemma
6.2]{AizenmanSimsStarr2006}), using the well-known averaging properties of the
RPC (see, e.g.,~\cite[(5.27)]{BovierKlimovskyAS2Guerra2008}) and the quenched
large deviations principle as is done in
\cite[Sections~3-5]{BovierKlimovskyAS2Guerra2008}.
\end{proof}

\begin{proof}[Proof of Theorem~\ref{thm:free-energy}]
Combining Lemmata~\ref{thm:as2:guerras-interpolation}, \ref{lem:ldp} and
\ref{lem:regularisation-works}, we obtain Theorem~\ref{thm:free-energy}.
\end{proof}

\section{Outlook}
\label{sec:outlook}
Combining the methods of Talagrand~\cite{TalagrandParisiFormula2006} with
Theorem~\ref{thm:free-energy}, we can show that the remainder term in
\eqref{eq:free-energy-variational-plus-remainder-term} vanishes at the
saddle-point. This implies that, in fact, the equality holds in
\eqref{eq:free-energy-upper-bound}. Summarising, we arrive
at the following result.
\begin{theorem}[Parisi-type formula]
\label{thm:parisi-type-formula}
In the case of the product state space \eqref{eq:product-space}, for all $\beta
\in \R$,
\begin{equation}
\label{eq:parisi-type-formula}
p(\beta)
=
\sup_{r \in [0;d]} \inf_{x \in \mathcal{X}(r)}
\mathcal{P}(\beta, r, g)[x]
,
\quad
\text{almost surely.}
\end{equation}
\end{theorem}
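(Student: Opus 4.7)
The upper bound in \eqref{eq:parisi-type-formula} is already contained in Corollary~\ref{thm:free-energy-upper-bound}, so the task reduces to proving the matching lower bound. By Theorem~\ref{thm:free-energy}, this is equivalent to showing that, at the optimizing pair $(r^\ast, x^\ast)$ of the variational problem \eqref{eq:free-energy-variational-plus-remainder-term}, the remainder functional $\mathcal{R}(r^\ast)[x^\ast]$ vanishes. The plan is to adapt Talagrand's positivity argument from~\cite{TalagrandParisiFormula2006} (see also~\cite{panchenko-free-energy-generalized-sk-2005}) to the isotropic-increments setting, leveraging the observation \eqref{eq:spherically-restricted-correlator} that the restriction of $X_N$ to a sphere of squared radius $rN$ is a mixed $p$-spin spherical SK model with convex effective correlator $G_r$ from \eqref{eq:effective-sk-correlator}.

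First, I would work at fixed $r \in (0;d]$, large regularization parameter $M$, and on the thin spherical layer $V_\eps = (r-\eps; r+\eps)$, using the Guerra-type interpolation of Section~\ref{sec:as2:guerras-scheme}. Integrating \eqref{eq:as2:guerras-interpolation-speed} in $t$ over $[0;1]$ expresses the local log-partition function as the Parisi functional minus the non-negative remainder $\mathcal{R}^{(M)}(r)[x]$, up to errors that vanish as $\eps \downarrow +0$ and $M \uparrow +\infty$. The integrand in \eqref{eq:lcture-03:the-remainder-term-definition} is, up to constants, the Bregman-type quantity
\begin{equation}
\Delta_r(q,q') := G_r(q') - G_r(q) - G_r'(q)(q'-q),
\end{equation}
evaluated at $q = q(\alpha^{(1)},\alpha^{(2)})$ and $q' = \langle u, v\rangle_N$. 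By convexity of $G_r$, it is non-negative and vanishes precisely when $q' = q$.

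The core step is then to establish \emph{overlap synchronization} at the saddle-point: under the interpolating Gibbs measure $\mathcal{G}_N^{(M)}(t,x^\ast,V_\eps)$ the physical overlap $\langle u, v\rangle_N$ should concentrate, as $N\uparrow +\infty$, on the lexicographic overlap $q(\alpha^{(1)},\alpha^{(2)})$, which by construction takes values in the support of $x^\ast$. Following \cite{TalagrandParisiFormula2006,panchenko-free-energy-generalized-sk-2005}, I would first establish the Ghirlanda--Guerra identities for the spherically restricted effective model via a small polynomial-in-overlap perturbation of the Hamiltonian combined with self-averaging of the perturbed free energy, and then combine those identities with the ultrametric structure of the RPC to force the joint law of the replica overlaps to match that prescribed by $x^\ast$. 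Given this synchronization, $\Delta_r(q,q')$ vanishes under the Gibbs measure, so $\mathcal{R}(r^\ast)[x^\ast] = 0$ and the matching lower bound follows.

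I expect the main obstacle to be the interaction between the overlap-concentration argument and the non-smoothness of $D$ at the origin in the long-range case (cf.~Remark~\ref{rem:correlator-singularity}), together with the need to send $M \uparrow +\infty$ at the end. The strategy is to run the Talagrand scheme at fixed finite $M$, where $D^{\prime,M}$ is bounded and $G_r$ is smooth on $[0;r]$, and then remove the regularization by the same cancellation of divergent linear-in-$M$ contributions between the Parisi functional and the entropic linear-in-$\theta_r^{(M)}$ term that drives Lemma~\ref{lem:regularisation-works}. A secondary technical step is the approximation of a continuous saddle-point order parameter $x \in \mathcal{X}(r)$ by discrete ones in $\mathcal{X}_n^\prime(r)$, which I would handle by the standard $L^1$-continuity of $x \mapsto \mathcal{P}(\beta,r,g)[x]$ familiar from the SK literature, together with the tightness of approximating saddle-points guaranteed by the boundedness of $[0;d]$ and $[0;1]$.
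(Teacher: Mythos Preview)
The paper does not actually prove Theorem~\ref{thm:parisi-type-formula}: it is stated in Section~\ref{sec:outlook} as an announcement, with the sentence ``The proofs of Theorems~\ref{thm:parisi-type-formula} and~\ref{thm:fyodorov-sommers-formula} are beyond the scope of this short communication and will be reported on elsewhere.'' The only information the paper gives is the one-line strategy preceding the statement: combine the methods of Talagrand~\cite{TalagrandParisiFormula2006} with Theorem~\ref{thm:free-energy} to show that the remainder term in \eqref{eq:free-energy-variational-plus-remainder-term} vanishes at the saddle-point, whence equality in \eqref{eq:free-energy-upper-bound}. Your high-level plan --- upper bound from Corollary~\ref{thm:free-energy-upper-bound}, lower bound by proving $\mathcal{R}(r^\ast)[x^\ast]=0$ using that the spherical restriction yields a mixed $p$-spin model with convex $G_r$ --- is exactly this announced strategy, and the technical obstacles you flag (the $D'$-singularity and the $M\uparrow\infty$ limit, discrete-to-continuous approximation of $x$) are the right ones.

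One point of caution on the mechanism you describe for overlap synchronization. The route ``perturb to get Ghirlanda--Guerra identities, then use the ultrametric structure of the RPC to force the overlap law to match $x^\ast$'' is closer in spirit to Panchenko's later framework than to Talagrand's original argument in~\cite{TalagrandParisiFormula2006}. Talagrand's scheme, which is what the paper explicitly invokes, proceeds instead by analyzing a \emph{coupled} two-replica system with a constraint on the mutual overlap, differentiating the Parisi functional at its minimizer to extract the stationarity conditions satisfied by $x^\ast$, and showing via a Guerra-type bound on the constrained free energy that the overlap concentrates on the support of $x^\ast$. Both routes ultimately kill the Bregman term $\Delta_r(q,q')$ you wrote, but if you intend to follow the paper's citation literally you should set up the coupled-system interpolation rather than the GG-identity perturbation.
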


Parallel to the product state space \eqref{eq:product-space}, one can consider the 
\emph{rotationally invariant state space}:
\begin{equation}
\label{eq:euclidean-ball}
S_N := \{ u \in \R^N : \Vert u \Vert_2 \leq L
\sqrt{N} \},
\quad
L > 0.
\end{equation}
In this case, we assume that
the a priori measure $\mu_N \in \mathcal{M}_{\text{finite}}(S_N)$ has the
density
\begin{align}
\label{eq:ball-external-field}
\frac{
\dd \mu
}{
\dd \lambda
}
(u)
:=
\exp
\left(
\sum_{i=1}^N f(u_i)
\right)
,
\quad
u = (u_i)_{i=1}^N \in \R^N
,
\quad
f: \R \to \R
\end{align}
 with respect to the Lebesgue measure $\lambda$ on $\R^N$. Let the
function $f$ be of the form $f(u) := h_1 u - h_2 u^2$, where $h_1
\in \R$ and $h_2 \in \R_+$ are given constants. Let us note that in case
\eqref{eq:euclidean-ball}, $d = L^2$.

In the case of the rotationally invariant state space
\eqref{eq:euclidean-ball}, one can obtain a more explicit representation for the
Parisi functional \eqref{eq:local-parisi-functional}, which does not require any
regularisation. Given $x \in \mathcal{X}(r)$, define
 $ q_{\mathrm{max}} := q_{\mathrm{max}}(x) := \sup \big\{ q \in [0;r] : x(q) < 1
\big\} $. Consider the Crisanti-Sommers type functional (cf.
\cite[(A2.4)]{CrisantiSommers1992} and \cite[(47)]{FyodorovSommers2007})
\begin{equation}
\label{eq:crisanti-sommers-functional}
\begin{aligned}
\mathcal{CS}(\beta, r)[x]
:=
&
\frac{1}{2}
\left[
\log(r - q_{\mathrm{max}})
+
\int_0^{q_{\mathrm{max}}}
\frac{\dd q}{
\int_q^{r}
x(s)
\dd s
}
+
h_1^2 \int_0^r x(q) \dd q
-
h_2 r
\right]
\\
&
+
\frac{\beta^2}{2}
\left(
D^\prime(2(r-q_\mathrm{max}))
+
\int_{
0
}^{
q_\mathrm{max}
}
D^\prime(2(r-q))x(q)\dd q
\right)
,
\quad
x \in \mathcal{X}(r)
.
\end{aligned}
\end{equation}
By reducing the case of the rotationally invariant state space to the product
state space case using a large deviations argument (an idea exploited in \cite{TalagrandSphericalSK}), one arrives at the
following result.
\begin{theorem}[Fyodorov-Sommers formula]
\label{thm:fyodorov-sommers-formula}
In the case of the rotationally invariant state space \eqref{eq:euclidean-ball},
for all $\beta \in \R_+$, $h_1 \in \R$, $h_2 \in \R_+$, there exists unique
$r^* \in [0;d]$ and unique $x^* \in \mathcal{X}(r)$ such that
\begin{equation}
\label{eq:fyodorov-sommers-formula}
p(\beta)
=
\max_{r \in [0;d]} \min_{x \in \mathcal{X}(r)}
\mathcal{CS}(\beta, r)[x]
=
\mathcal{CS}(\beta, r^*)[x^*]
,
\quad
\text{almost surely.}
\end{equation}
\end{theorem}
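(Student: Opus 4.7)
My plan is to reduce the rotationally invariant case to the product-state-space formula (Theorem~\ref{thm:parisi-type-formula}) by a large deviations argument on the radius $\Vert u \Vert_2^2/N$, and then to carry out a Cole-Hopf computation that turns the Parisi functional $\mathcal{P}(\beta,r,g)[x]$ into the Crisanti-Sommers functional $\mathcal{CS}(\beta,r)[x]$; uniqueness of the saddle will then follow from strict convexity arguments in the spirit of Talagrand~\cite{TalagrandSphericalSK}.

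First, because the a priori density $\exp\bigl(\sum_i(h_1 u_i - h_2 u_i^2)\bigr)$ is a product, the pushforward of $\mu_N$ under $u \mapsto \Vert u \Vert_2^2/N$ obeys a Cramér LDP on $[0;L^2]$ with a convex rate function computable via Legendre duality of $f$. Combined with the concentration inequality~\eqref{eq:concentratino-inequality} and a standard Laplace argument, this rewrites $p(\beta)$ as the supremum over $r \in [0;L^2]$ of shell partition functions restricted to $\Vert u \Vert_2^2/N \in V_\eps$. On each such shell the restricted field $X_N$ is, by~\eqref{eq:spherically-restricted-correlator}, a mixed $p$-spin spherical SK model with the convex correlator $G_r$, so Theorem~\ref{thm:parisi-type-formula} becomes available after absorbing $-h_2 u^2$ into the $\lambda u^2$ term of~\eqref{eq:product-space-terminal-condition}; the Lagrange multiplier $\lambda \in \R$ in~\eqref{eq:local-parisi-functional} precisely encodes the radius constraint of the first step.

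The core of the proof is the second step: converting $\mathcal{P}(\beta,r,g)$ into $\mathcal{CS}(\beta,r)$. Since
\[
g_\lambda(y) = \log \int_{\R} \exp\bigl(\beta u y + (\lambda - h_2) u^2 + h_1 u\bigr)\, \dd u
\]
is the log-Laplace of a Gaussian, it is a quadratic-plus-linear function of $y$. On every plateau $[q_k;q_{k+1})$ of a discrete $x \in \mathcal{X}^\prime_n(r)$ the Parisi equation~\eqref{eq:parisi-pde} with constant $x \equiv x_k$ is linearised by the Cole-Hopf substitution $F = \exp(x_k f)$, reducing to a linear heat equation on that slab; iterating backwards from $q = r$ keeps the solution quadratic-plus-linear in $y$ at every breakpoint, so $f^{(M)}_{r,x,g_\lambda}(0,0)$ collapses to a composition of $n$ explicit one-dimensional Gaussian integrals. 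Telescoping the resulting algebraic identity and integrating by parts in the Stieltjes integral $\int x(q)\,\dd\theta_r^{(M)}(q)$ produces the $\frac{1}{2}\log(r - q_{\mathrm{max}})$ entropy term, the Parisi integral $\frac{1}{2}\int \dd q / \int_q^r x(s)\,\dd s$, the external-field contribution $\frac{1}{2} h_1^2 \int x(q)\,\dd q$, and the correlator term $\frac{\beta^2}{2}\int D'(2(r-q)) x(q)\,\dd q$ of~\eqref{eq:crisanti-sommers-functional}; the summand $-\frac{1}{2} h_2 r$ is produced by the infimum over $\lambda$. The regulariser $M$ affects only the top plateau and cancels into $q_{\mathrm{max}}$ via the same mechanism as in Lemma~\ref{lem:regularisation-works}, and continuity of the functional extends the identity from $\mathcal{X}^\prime_n(r)$ to $\mathcal{X}(r)$ by density.

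For uniqueness I would adapt Talagrand's approach to spherical models~\cite{TalagrandSphericalSK}: convexity of $G_r$ makes $x \mapsto \mathcal{CS}(\beta,r)[x]$ strictly convex on $\mathcal{X}(r)$, yielding a unique minimiser $x^*(r)$ for each $r$; an envelope argument together with a direct variational calculation then selects a unique optimal radius $r^* \in [0;d]$. I expect the second step to be the main obstacle: matching the two functional forms demands careful bookkeeping of the Cole-Hopf iteration, of the integration by parts in the Stieltjes integral, and of the simultaneous removal of the singularity regulariser $M \uparrow +\infty$. The interaction of a potentially singular $D'(0+)$ with the top plateau $q_{\mathrm{max}}$ of $x$ is the most delicate point—this is precisely why~\eqref{eq:regularized-derivative} was introduced—and showing that the regularisation disappears cleanly in the Crisanti-Sommers formulation will be the main technical effort.
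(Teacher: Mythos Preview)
The paper does not actually prove Theorem~\ref{thm:fyodorov-sommers-formula}: it is stated in Section~\ref{sec:outlook} as an announced result, and the text explicitly says ``The proofs of Theorems~\ref{thm:parisi-type-formula} and~\ref{thm:fyodorov-sommers-formula} are beyond the scope of this short communication and will be reported on elsewhere.'' The only indication of method the paper gives is the sentence preceding the theorem---``By reducing the case of the rotationally invariant state space to the product state space case using a large deviations argument (an idea exploited in~\cite{TalagrandSphericalSK})''---together with the remark afterwards that $\mathcal{CS}(\beta,r)[\cdot]$ is strictly convex in $x$.

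Your proposal is fully consistent with that sketch: the LDP-on-the-radius reduction to the product case (your first step) is exactly what the paper names, and your appeal to strict convexity for uniqueness matches the paper's remark. Your second step---the Cole--Hopf/iterated-Gaussian computation turning $\mathcal{P}(\beta,r,g)$ into $\mathcal{CS}(\beta,r)$ and the handling of the $M$-regularisation at $q_{\mathrm{max}}$---goes well beyond anything the paper writes down, so there is nothing in the paper to compare it against. In short: your strategy agrees with the paper's one-line hint, and the substantive content of your proposal (the functional identification and the regularisation removal) is precisely the part the paper defers.
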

The proofs of Theorems~\ref{thm:parisi-type-formula}
and~\ref{thm:fyodorov-sommers-formula} are beyond the scope of this short
communication and will be reported on elsewhere.
\begin{remark}
The Crisanti-Sommers type functional~\eqref{eq:crisanti-sommers-functional}
corresponds to the a priori distribution~\eqref{eq:ball-external-field}, which
represents the linear combination of linear and quadratic external fields.
Formula~\cite[(47)]{FyodorovSommers2007} was derived under the assumption of the quadratic external
field, whereas formula~\cite[(A2.4)]{CrisantiSommers1992} was obtained for the spherical
SK model with the linear external field.
\end{remark}
\begin{remark}
The explicit form of the functional \eqref{eq:crisanti-sommers-functional}
assures that it is strictly convex with respect to $x \in \mathcal{X}(r)$. In
contrast, convexity of the functional \eqref{eq:local-parisi-functional} is (to
the author's best knowledge) open, see~\cite{PanchenkoParisiMeasures2004} and
\cite[Theorem~6.4]{BovierKlimovskyAS2Guerra2008} for partial results.
\end{remark}

\appendix

\section{Characterisation of the correlators}
We recall some facts about high-dimensional Gaussian processes with isotropic
increments. The following result can be found in the work \cite{yaglom_classes_1957} of A.M.~Yaglom (see also \cite{Yaglom1987}).
\begin{theorem}
\label{thm:characterisation-correlation-functions}
If $X$ is a Gaussian random field with isotropic increments that satisfies
\eqref{eq:incremental-covariance}, then one of the following two cases holds:
\begin{enumerate}

\item
\label{i:isotropic-field}
\emph{Isotropic field.} There exists the \emph{correlation function} $B: \R_+
\to \R$ such that
\begin{align}
\label{eq:stationary-correlation-structure}
\E
\left[
X_N(u)
X_N(v)
\right]
=
B\left(\frac{1}{N}\Vert u - v \Vert_2^2\right)
,
\quad
u,v \in \Sigma_N
,
\end{align}
where the function $B$ has the representation
\begin{align}
\label{eq:correlation-function-representation}
B(r)
=
c_0
+
\int_0^{+\infty}
\exp
\left(
-t^2 r
\right)
\nu(\dd t)
,
\end{align}
where $c_0 \in \R_+$ is a constant and $\nu \in
\mathcal{M}_{\mathrm{finite}}(\R_+)$ is a non-negative finite measure.
In this case, the function $D$ in \eqref{eq:incremental-covariance} is expressed
in terms of the correlation function $B$ as
\begin{equation}
\label{eq:short-range-incremental-covariance-representation}
D(r)= 2 (B(0)-B(r))
.
\end{equation}

\item
\label{i:isotropic-increments}
\emph{Non-isotropic field with isotropic increments}. The function $D$ in
\eqref{eq:incremental-covariance} has the following representation
\begin{align}
\label{eq:long-range-incremental-covariance-representation}
D(r)
=
\int_0^{+\infty} \big[1-\exp\big(-t^2 r \big)\big] \nu(\dd t) + A \cdot r
,
\quad
r \in \R_+
,
\end{align}
where $A \in \R_+$ is a constant and $\nu \in \mathcal{M}((0;+\infty))$ is
a $\sigma$-finite measure with
\begin{align}
\label{eq:long-range-integrability-condition}
\int_0^{+\infty} \frac{t^2 \nu(\dd t)}{t^2+1}  < \infty
.
\end{align}
\end{enumerate}
\end{theorem}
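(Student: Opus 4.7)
The plan is to work from the fact that \eqref{eq:incremental-covariance} must hold for every $N \in \N$ simultaneously with one and the same correlator $D$. This promotes $D$ from a radial profile on a single Euclidean space to one compatible with every dimension, which puts the problem squarely within Schoenberg's classical theory of radial positive definite and conditionally negative definite functions valid in all dimensions.

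The dichotomy between the two cases I would govern by whether the field admits a stationary realisation, equivalently by whether $\sup_r D(r) < \infty$. In the bounded case, after a constant shift one obtains an isotropic covariance of the form $B(\|u-v\|^2/N) = B(0) - D(\|u-v\|^2/N)/2$, whose kernel must be positive definite on $\R^N$ for every $N$. Schoenberg's theorem then forces $B$ to be the Laplace transform of a finite measure, producing \eqref{eq:correlation-function-representation} with $c_0 = \lim_{r \uparrow \infty} B(r)$, and \eqref{eq:short-range-incremental-covariance-representation} follows by subtraction. This is case \ref{i:isotropic-field}.

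In the unbounded case no stationary realisation exists, and one works directly with the variogram $(u,v) \mapsto D(\|u-v\|^2/N)$, which is conditionally negative definite on $\R^N$ for every $N$. Applying Schoenberg's theorem to $\exp(-s D)$ for each $s > 0$ (each such kernel is a positive definite radial function in all dimensions, hence a Laplace transform) and passing $s \downarrow 0$ yields the Lévy-Khintchine-type representation \eqref{eq:long-range-incremental-covariance-representation}: the integral collects the exponential building blocks, the linear term $Ar$ absorbs the Brownian/Gaussian component (an atom at $t = \infty$ in the spectral picture), and the integrability condition \eqref{eq:long-range-integrability-condition} is precisely what makes the representing integral finite for every $r \in \R_+$. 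Existence in both cases is then verified by constructing $X_N$ as a white-noise integral against the resulting spectral measure on $(0;+\infty)$, supplemented by an independent Brownian part to account for the linear term in case \ref{i:isotropic-increments}.

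The main technical obstacle is the ``all-dimensions'' step itself: upgrading positive (respectively, conditionally negative) definiteness on every individual $\R^N$ to a genuine spectral representation on $\R_+$. The standard route is to pass to $N \uparrow \infty$ in the Bessel-type characterisation of radial positive definite functions on $\R^N$ (where the Bessel kernels degenerate to $\exp(-t^2 r)$ in the high-dimensional limit) and to extract the spectral measure $\nu$ by a Helly/tightness selection argument; the integrability condition \eqref{eq:long-range-integrability-condition} arises naturally as the uniform tightness bound. Once this representation theorem is in hand, the remainder is bookkeeping: the translation into the explicit forms \eqref{eq:stationary-correlation-structure}--\eqref{eq:correlation-function-representation} and \eqref{eq:long-range-incremental-covariance-representation} is direct. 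Since the full statement is due to Yaglom, my exposition would be organised around citing the representation results of \cite{yaglom_classes_1957,Yaglom1987} and verifying the translation into the present notation, rather than redoing the underlying harmonic analysis from scratch.
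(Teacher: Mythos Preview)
Your proposal is correct and in fact aligns with what the paper does: the paper gives no proof of this theorem at all, merely stating it in the appendix and attributing it to Yaglom \cite{yaglom_classes_1957,Yaglom1987}. Your sketch via Schoenberg's theory and the bounded/unbounded dichotomy is the right heuristic picture, and your plan to ultimately cite Yaglom rather than redo the harmonic analysis matches the paper's treatment exactly.
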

\begin{remark}
\label{rem:short-long-range-cases} In
Theorem~\ref{thm:characterisation-correlation-functions}, assuming $c_0 = 0$,
case~\ref{i:isotropic-field} is sometimes referred to as the \emph{short-range}
one which reflects the decay of correlations: $B(r)\downarrow +0$, as $r
\uparrow +\infty$. This fact follows from the
representation~\eqref{eq:correlation-function-representation}. Correspondingly,
case~\ref{i:isotropic-increments} is called the \emph{long-range} one, since
here, assuming $X(0)=0$, the correlation structure is
\begin{align}
\label{eq:long-range-covariance}
\E
\left[
X_N(u) X_N(v)
\right]
=
\frac{1}{2}
\left(
D_N(\Vert u \Vert_2^2)
+
D_N(\Vert v \Vert_2^2)
-
D_N(\Vert u - v \Vert_2^2)
\right)
,
\quad
u,v \in \R^N
.
\end{align}
Equation \eqref{eq:long-range-covariance} in combination with the
representation~\eqref{eq:long-range-incremental-covariance-representation}
implies that the correlations of the field $X_N$ do not decay, as
$\Vert u - v \Vert \to +\infty$. 
\end{remark}
\begin{remark}
\label{rem:correlator-concavity}
Theorem~\ref{thm:characterisation-correlation-functions} implies that the
function $D$ appearing in \eqref{eq:incremental-covariance} is necessarily
concave, infinitely differentiable, and non-decreasing on $(0;+\infty)$.
\end{remark}

\noindent\textbf{Acknowledgements.} The author is grateful to Prof.~Yan~V.~Fyodorov for
useful remarks and his interest in this work. Kind hospitality of the Hausdorff
Research Institute for Mathematics, where a part of the present work was done, is gratefully
acknowledged.

\bibliographystyle{plain}
\bibliography{bibliography}

\end{document}